\documentclass{article}
\usepackage{graphicx} 
\usepackage{amsmath, amssymb, amsthm, mathrsfs}

\usepackage{fullpage}
\usepackage{bbm}
\usepackage{tikz}
\usepackage{stmaryrd}
\usepackage{mathtools}
\usepackage{array}
\usepackage[colorlinks=true,
linkcolor=blue,citecolor=blue,
urlcolor=blue]{hyperref}

\allowdisplaybreaks

\newtheorem{theorem}{Theorem}
\newtheorem{lemma}[theorem]{Lemma}

\newtheorem{proposition}[theorem]{Proposition}
\theoremstyle{definition}

\theoremstyle{remark}

\newcommand{\cF}{\mathcal{F}}
\newcommand{\cH}{\mathcal{H}}
\newcommand{\cA}{\mathcal{A}}
\newcommand{\cB}{\mathcal{B}}
\newcommand{\cI}{\mathcal{I}}

\newcommand{\cE}{\mathcal{E}}
\newcommand{\cC}{\mathcal{C}}

\DeclareMathOperator{\co}{co_2}

\DeclareMathOperator{\cojp}{co_{j, p}}
\DeclareMathOperator{\coonep}{co_{1, p}}
\DeclareMathOperator{\cotwop}{co_{2, p}}

\title{The Hilton-Milner theorem in the $\ell_2$-norm}
\author{George Brooks\thanks{University of South Carolina, Columbia, SC, USA. ({\tt ghbrooks@email.sc.edu}). Partially supported by NSF RTG Grant DMS 2038080.}
\and
William Linz\thanks{University of Memphis, Memphis, TN, USA. ({\tt wlinz1@memphis.edu}).}}
\date{\today}

\begin{document}

\maketitle

\begin{abstract}
The Hilton-Milner theorem is a classical theorem in extremal set theory which determines the maximum size of a nontrivial $k$-uniform intersecting family. In this paper, we prove an exact version of the Hilton-Milner theorem in the $\ell_2$-norm. Our method also gives an asymptotic version of the Hilton-Milner theorem in the $(j, p)$-norm.     
\end{abstract}

\section{Introduction}

For integers $a < b$, we define $[a, b]:= \{x\in \mathbb{Z}: a\le x \le b\}$ and set $[n]:= [1, n]$ for a positive integer $n$. For a set $V$ and a positive integer $k$, we use the notation $\binom{V}{k}$ to denote the family of all $k$-element subsets of $V$. 

A family of sets $\cF \subset \binom{[n]}{k}$ is \emph{intersecting} if $|F\cap F'| \ge 1$ for every $F, F'\in \cF$. The maximum size of a $k$-uniform intersecting family is a classic result in extremal set theory which was proved by Erd\H{o}s, Ko and Rado~\cite{EKR}. 

\begin{theorem}[Erd\H{o}s-Ko-Rado]\label{thm:ekr}
Let $n$ and $k$ be positive integers and let $\cF \subset \binom{[n]}{k}$ be an intersecting family. Then for $n\ge 2k$,
\[|\cF|\le \binom{n-1}{k-1}.\]
For $n > 2k$, equality holds only if $\cF \cong\{F: F\in\binom{[n]}{k},\, 1\in F\}$. 
\end{theorem}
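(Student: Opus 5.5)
I would prove Theorem~\ref{thm:ekr} by Katona's cyclic permutation argument, using double counting over pairs (cyclic order, set) to obtain the bound. I would then treat the uniqueness statement for $n>2k$ separately.

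\textbf{The bound.} A cyclic order of $[n]$ is a bijection $\sigma:\mathbb{Z}_n\to[n]$, considered up to rotation; there are $(n-1)!$ of them. An \emph{arc} of $\sigma$ is a set of the form $\{\sigma(i),\sigma(i+1),\dots,\sigma(i+k-1)\}$.

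The key lemma is that, for $n\ge 2k$, at most $k$ of the $n$ arcs of a fixed cyclic order can lie in an intersecting family. To see this, suppose the arc $A_0$ starting at position $0$ is in $\cF$. Every other arc meeting $A_0$ starts or ends strictly inside $A_0$. These arcs pair up: the arc ending at position $i$ goes with the arc starting at position $i+1$, for $i=0,\dots,k-2$. Because $n\ge 2k$, the two arcs in each pair are disjoint, so at most one of each pair lies in $\cF$. Counting $A_0$ plus one arc from each of the $k-1$ pairs gives at most $k$ arcs.

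Now double count the pairs $(\sigma,F)$ with $F\in\cF$ and $F$ an arc of $\sigma$. Each $k$-set is an arc of exactly $k!(n-k)!$ cyclic orders. This gives
\[
|\cF|\,k!(n-k)!\le k(n-1)!,
\]
which rearranges to $|\cF|\le\binom{n-1}{k-1}$.

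\textbf{Uniqueness for $n>2k$.} Equality forces every cyclic order to contain exactly $k$ arcs from $\cF$. I would first sharpen the lemma when $n>2k$: the $k$ arcs must be exactly the $k$ arcs through a common point. The argument runs as follows.

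Let the arcs in $\cF$ be $A_{s_1},\dots,A_{s_k}$, indexed by their starting positions, with one arc from each pair. Suppose that within this window the choice switches direction, so that $A_{i+1}$ is chosen but $A_{i+1-k}$ is not, and then a later "ending" arc is chosen. Because $n>2k$, some chosen arcs would then be disjoint. Hence the chosen starting points form $k$ consecutive positions, and the chosen arcs all contain one position.

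Next I transfer this local structure to a global one. Fix a cyclic order $\sigma$ and let $x$ be the common point of its $k$ arcs from $\cF$. Consider the orders obtained from $\sigma$ by swapping two adjacent elements. I would show that the common point is preserved, except in degenerate positions, which can be handled by choosing different swaps. Since adjacent transpositions connect all cyclic orders, $x$ is then the same for every order. Finally, every $k$-set containing $x$ is the middle arc of some order, so all of them lie in $\cF$, and $\cF$ is the star at $x$.

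\textbf{Expected obstacle.} The main obstacle is this last step, showing that the centre $x$ does not change between adjacent cyclic orders. If that bookkeeping becomes messy, I would switch to an alternative. Since $\cF$ has the same size as a star, I would argue directly, possibly via shifting or the Hilton--Milner-type bound, that a non-star intersecting family has strictly fewer than $\binom{n-1}{k-1}$ members when $n>2k$. That bound follows from exhibiting a single cyclic order containing fewer than $k$ arcs of $\cF$.
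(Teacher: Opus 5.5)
\textbf{Comparison with the paper.} The paper gives no proof of this statement. It quotes the Erd\H{o}s--Ko--Rado theorem as a classical result with a citation, so your argument has to stand on its own. You use Katona's cyclic-order method rather than the original shifting argument.

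\textbf{The bound.} This part is correct and complete. The pairing of the arc ending at $i$ with the arc starting at $i+1$ is right, as is the count of $k!(n-k)!$ cyclic orders in which a given $k$-set is an arc, and the final rearrangement.

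\textbf{The local sharpening.} The claim is true, but the configuration you call contradictory is exactly the allowed one. Write $A_s$ for the arc starting at position $s$, with $A_0\in\cF$.
\begin{itemize}
\item Choosing the starting arc $A_{i+1}$ together with an ending arc $A_{i'-k+1}$ for a later $i'>i$ is harmless, since the two arcs share positions $i+1,\dots,i'$. For example, when $k=3$ the choice $\{A_{-1},A_0,A_1\}$ has this form, and all three arcs contain position $1$.
\item The forbidden configuration is the reverse: the ending arc $A_{i-k+1}$ (positions $i-k+1,\dots,i$) together with the starting arc $A_{i+2}$ (positions $i+2,\dots,i+k+1$). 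These occupy $2k+1\le n$ consecutive positions with a gap at $i+1$, so they are disjoint.
\end{itemize}
Hence starting arcs are chosen for $i<t$ and ending arcs for $i\ge t$. This gives the block $A_{t-k+1},\dots,A_t$, whose common position is $t$.

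\textbf{The global step.} You flag this as the main obstacle, but it goes through cleanly, so no fallback is needed. Assume $k\ge 2$; the case $k=1$ is trivial.
\begin{itemize}
\item Swapping the elements in positions $a$ and $a+1$ changes only the arcs starting at $a+1$ and at $a-k+1$. So the old and new blocks of $\cF$-arcs, both runs of $k$ consecutive starting positions, have symmetric difference inside $\{a-k+1,a+1\}$. Hence the block either stays put or shifts by one.
\item Let $c$ be the centre position. A shift would make the symmetric difference $\{c-k+1,c+1\}$ or $\{c-k,c\}$. Since $2k\not\equiv 0 \pmod n$, matching this with $\{a-k+1,a+1\}$ forces $c\in\{a,a+1\}$.
\item Therefore any swap not involving position $c$ preserves both the centre position and the centre element $x$. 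Your ``degenerate'' swaps are exactly those that move the element at $c$, and you never need them.
\item Consider the adjacent transpositions along the path formed by the other $n-1$ positions. Applied one at a time, each keeps the centre at $c$, and together they generate every arrangement with $x$ in position $c$. These arrangements represent all cyclic orders.
\end{itemize}
Every $F\in\cF$ is an arc of some order, so it contains $x$. Since $|\cF|=\binom{n-1}{k-1}$, $\cF$ is the star at $x$.
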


The Erd\H{o}s-Ko-Rado theorem has been generalized and extended in many different ways. See \cite{Fra87, FT2016} for general surveys on extremal set theory and the book of Godsil and Meagher~\cite{GM} for a general survey on algebraic analogues of the Erd\H{o}s-Ko-Rado theorem. 

Recently, Balogh, Clemen and Lidick\'y~\cite{BCL} introduced a new Tur\'an-type problem using the codegree squared sum as the measure of the size of a hypergraph. For a $k$-uniform hypergraph $\cH = ([n], \cE)$, define the \emph{codegree} of a set $J\subset [n]$ to be $d_{\cH}(J):= |\{E: J\subset E \in \cE\}|$. When it is clear from context, we may omit the subscript and simply write $d(J)$. The \emph{codegree vector} of $\cH$ is the vector ${\bf v} \in \mathbb{Z}^{\binom{[n]}{k-1}}$ whose entries are the codegrees of the $(k-1)$-element subsets of $[n]$. The \emph{codegree squared sum} \[\co(\cH) := \sum_{J \in \binom{[n]}{k-1}}d(J)^2\] is defined as the square of the $\ell_2$-norm of the codegree vector ${\bf v}$. Note that $\sum_{J\in \binom{[n]}{k-1}}d(J) = k|\cH|$, so hypergraph Tur\'an problems in the $\ell_1$-norm of the codegree vector are essentially equivalent to classical hypergraph Tur\'an problems.   

In a previous paper~\cite{BL}, we proved a version of the Erd\H{o}s-Ko-Rado theorem in the $\ell_2$-norm by showing that the stars also have maximum codegree squared sum among intersecting families.  

\begin{theorem}[Erd\H{o}s-Ko-Rado in $\ell_2$-norm]\label{thm:ekrl2}
Let $\cF \subset \binom{[n]}{k}$ be an intersecting family with $n\ge 2k$. Then,
\[\co(\cF) \le \binom{n-1}{k-1}(1+(n-k+1)(k-1)),\]
with equality only if $\cF \cong \{F\subset \binom{[n]}{k} : 1\in F\}$ if $n > 2k$.  If $n=2k$, equality holds only for the $1$-star $\cF \cong \{F\subset \binom{[2k]}{k} : 1\in F\}$ and the complement of the $1$-star $\cF \cong\ \binom{[2k-1]}{k}$. 
\end{theorem}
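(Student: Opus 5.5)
The plan is to first rewrite the codegree squared sum combinatorially, and then run a shifting-plus-induction argument. Double counting pairs $(J,\{F,F'\})$ with $J\subset F\cap F'$ gives
\[\co(\cF)=\sum_{F,F'\in\cF}\binom{|F\cap F'|}{k-1}=k|\cF|+2e(\cF),\]
where $e(\cF)$ is the number of edges of the subgraph of the Johnson graph $J(n,k)$ induced on $\cF$, i.e.\ the number of unordered pairs with $|F\cap F'|=k-1$. For the star, $|\cF|=\binom{n-1}{k-1}$. Each member has exactly $(k-1)(n-k)$ neighbours inside the star: replace a non-$1$ element by an outside element. Hence
\[\co=\binom{n-1}{k-1}\bigl(k+(k-1)(n-k)\bigr)=\binom{n-1}{k-1}\bigl(1+(k-1)(n-k+1)\bigr),\]
which matches the claimed bound. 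So the goal becomes $k|\cF|+2e(\cF)\le$ this quantity for every intersecting $\cF$.

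\textbf{Step 1: reduce to shifted families.} I will show that the $(i,j)$-shift $S_{ij}$ does not decrease $e(\cF)$. It is well known to preserve $|\cF|$ and the intersecting property. For the edge count, I will argue pair by pair. Take two sets $F,F'$ at distance one and sort them according to which of them move under $S_{ij}$. I will then exhibit an injection from the destroyed Johnson edges into the created ones, using the usual trick of pairing $F$ with its shift $F-j+i$. This is the analogue of the classical proof that shifting does not decrease the number of edges induced in the hypercube.

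For the uniqueness part, I need to check that a shift which keeps $\co$ equal to the maximum can only produce a star from a star. Alternatively, I can recover uniqueness at the end from a stability argument.

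\textbf{Step 2: induction on $n$ for shifted $\cF$ with $n>2k$.} Split $\cF=\cF(\bar n)\cup\cF(n)$ according to whether $n\in F$, and split the $(k-1)$-sets $J$ the same way. The sets $J\not\ni n$ give $\co(\cF(\bar n))$ plus cross terms $2\sum_J d_{\cF(\bar n)}(J)\,d_{\cF(n)}(J)$. The sets $J\ni n$ give $\co$ of the $(k-1)$-uniform link $\cF(n)^{-}:=\{F\setminus\{n\}\}$ on $[n-1]$.

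Since $\cF$ is shifted and $n>2k$, the standard argument shows that $\cF(\bar n)$ is intersecting on $[n-1]$. The same argument shows that $\cF(n)^{-}$ is intersecting as a $(k-1)$-uniform family on $[n-1]$ with $n-1\ge 2(k-1)$. By induction, $\co(\cF(\bar n))$ and $\co(\cF(n)^{-})$ are bounded by the corresponding star values. Also, $|\cF(n)|\le\binom{n-2}{k-2}$ by Theorem~\ref{thm:ekr}.

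The cross term is bounded by noting that each $F\in\cF(n)$ has at most $k-1$ sets $J\subset F$ avoiding $n$. For each such $J$, $d_{\cF(\bar n)}(J)\le n-k$, and for shifted families an intersecting constraint cuts this down further. The star values satisfy the Pascal-type identity
\[\co(\text{star on }[n],k)=\co(\text{star on }[n-1],k)+\co(\text{star on }[n-1],k-1)+\text{(star cross term)},\]
so the induction closes provided the cross term for $\cF$ is at most that of the star.

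\textbf{Step 3: base case $n=2k$.} Here every $k$-set is paired with its complement, and $\cF$ contains at most one from each pair. I will bound $2e(\cF)$ directly. Each $F$ has $k^2$ neighbours in $J(2k,k)$. A neighbour $F-x+y$ is the complement of $\overline{F}-y+x$, and an intersecting family can contain at most one set from each complementary pair. This should give $\co(\cF)\le\frac12\binom{2k}{k}(1+(k-1)(k+1))=\frac12\binom{2k}{k}\,k^2$, with equality exactly when $\cF$ is a maximal intersecting family whose induced degree is uniformly $k^2-k$. These should be the star and $\binom{[2k-1]}{k}$, which I would check via the Kruskal--Katona/shifted structure.

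\textbf{Main obstacle.} I expect the hardest step to be the cross-term inequality in Step~2: showing that $\sum_J d_{\cF(\bar n)}(J)\,d_{\cF(n)}(J)$ is maximized by the star. That sum couples the two parts, so it does not follow from the two separate inductive bounds. I would first try to bound it by $(k-1)\,|\cF(n)|\cdot\max_J d_{\cF(\bar n)}(J)$. I would then use shiftedness to show that every $J$ counted there contains $1$ or is forced to have small degree, falling back on a Hilton--Milner-type case split (star vs.\ non-star) if this is too lossy near $n=2k+1$.
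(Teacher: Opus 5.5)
\textbf{Gap: the base case $n=2k$.} Your induction cannot avoid this case. Removing the element $n$ always leaves a $k$-uniform family on $n-1$ points, so every chain of reductions ends at some $(2m,m)$. At $n=2k$ the part $\cF(\bar n)\subseteq\binom{[2k-1]}{k}$ is not controlled by any inductive hypothesis.

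For this case you only say that the complementary-pair observation ``should give'' $\co(\cF)\le k^2\binom{2k-1}{k-1}$. The local version of that count is false. For $k\ge 3$, a set $F$ together with all $k^2$ of its Johnson neighbours is intersecting, since two neighbours share $F\setminus\{x,x'\}$. So there is no pointwise bound $\deg_{\cF}(F)\le k^2-k$, and a global argument is needed.

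One argument that works: pass to a maximal $\cF$, i.e.\ one set from each complementary pair. Each of the $\binom{2k}{k}(k!)^2$ shortest paths in $J(2k,k)$ from some $F$ to $\overline{F}$ must use an edge between $\cF$ and its complement family. Each edge lies on exactly $2(k!)^2/k$ of these paths, so there are at least $k\binom{2k-1}{k-1}$ boundary edges. Since complementation is an automorphism, this gives $2e(\cF)\le(k^2-k)\binom{2k-1}{k-1}$.

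More seriously, the equality characterization at $n=2k$ is left to an unspecified ``Kruskal--Katona/shifted structure'' check. This is the only place where $\binom{[2k-1]}{k}$ enters. Shifting cannot do this job: every maximal intersecting family at $n=2k$ has size $\binom{2k-1}{k-1}$, so knowing that a shift of $\cF$ is extremal says nothing about $\cF$ itself. You would have to classify the families in which every such antipodal path crosses the boundary exactly once. Equivalently, $1_{\cF}-\frac12$ must lie in the eigenspace of $J(2k,k)$ for the eigenvalue $k^2-2k$. The proposal does not address this.

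\textbf{The step you flag as the main obstacle is routine.} Once the decomposition is written correctly it is easy. For $J\not\ni n$, $d_{\cF(n)}(J)$ equals $1$ if $J\cup\{n\}\in\cF$ and $0$ otherwise. So each $F\in\cF(n)$ has exactly one $(k-1)$-subset avoiding $n$ (not $k-1$), and you dropped the term $\sum_{J\not\ni n}d_{\cF(n)}(J)^2=|\cF(n)|$. The correct identity is $\co(\cF)=\co(\cF(\bar n))+\co(\cF(n)^{-})+|\cF(n)|+2\sum_{F\in\cF(n)}d_{\cF(\bar n)}(F\setminus\{n\})$. Since $d_{\cF(\bar n)}(F\setminus\{n\})\le n-k$ and $|\cF(n)|\le\binom{n-2}{k-2}$, the last two terms are at most $\binom{n-2}{k-2}(2(n-k)+1)$, which is exactly the star's contribution.

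Uniqueness for $n>2k$ is also cheap. Equality propagates through the induction and forces the shifted family to have size $\binom{n-1}{k-1}$. Shifting preserves size, so EKR uniqueness applies to $\cF$ directly.

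\textbf{Comparison with the paper.} The paper avoids the induction entirely. Bey's inequality bounds $\co(\cF)$ by $\frac{k}{\binom{n-1}{k-1}}|\cF|^2+(k-1)(n-k)|\cF|$, which is increasing in $|\cF|$. EKR then gives the bound for all $n\ge 2k$ at once. Equality forces $|\cF|=\binom{n-1}{k-1}$ together with equality in Bey's inequality, whose extremal configurations (the star and its complement) give precisely the two families at $n=2k$. Bey's inequality is essentially the spectral fact about $J(n,k)$ that your base case is missing.
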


The result is a direct consequence of an inequality of Bey~\cite{Bey}.

\begin{theorem}[Bey's inequality]\label{thm:bey}
Let $\cH = (V, \cE)$ be a $k$-uniform hypergraph with $|V| = n$. 
Then, 
\[\sum_{J\in \binom{V}{k-1}}d(J)^2\le \frac{k}{\binom{n-1}{k-1}}|\cE|^2 + (k-1)(n-k)|\cE|.\]

Equality holds if $\cH$ is the $1$-star \[S_k^1 = \left\{F\in \binom{[n]}{k}: 1 \in F\right\}\] or its complement; the complete graph $K_k^n$ or its complement; or if $n=k+1$ and $\cH$ is the $t$-star $S_k^t = \{F \in \binom{[k+1]}{k}: [t] \subset F\}$ for $2\le t\le \lfloor{\frac{k+1}{2}\rfloor}$ or its complement.
\end{theorem}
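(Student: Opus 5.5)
The plan is to prove Bey's inequality spectrally, in the Johnson scheme. Let $N=\binom{n}{k}$ and let $x\in\mathbb{R}^{\binom{V}{k}}$ be the indicator vector of $\cE$. Let $W$ be the $\binom{V}{k-1}\times\binom{V}{k}$ inclusion matrix, with $W_{J,F}=1$ if $J\subset F$ and $0$ otherwise. Then $(Wx)_J=d(J)$, so
\[\sum_{J}d(J)^2=\|Wx\|^2=x^{T}W^{T}Wx .\]
Two distinct $k$-sets lie above a common $(k-1)$-set exactly when they share $k-1$ elements. Hence $W^{T}W=kI+A$, where $A$ is the adjacency matrix of the Johnson graph $J(n,k)$. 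The problem therefore reduces to bounding a quadratic form whose eigenstructure is explicit.

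Next I would record the spectrum. By the standard Johnson scheme decomposition, $\mathbb{R}^{\binom{V}{k}}=V_0\oplus V_1\oplus\cdots\oplus V_m$ orthogonally, where $m=\min(k,n-k)$ and $V_0$ is spanned by the all-ones vector $\mathbf{1}$. On $V_i$ the matrix $A$ acts as $(k-i)(n-k-i)-i$. So $W^{T}W$ acts on $V_i$ as
\[\lambda_i=k+(k-i)(n-k-i)-i=(k-i)(n-k-i+1).\]
As a check, $\lambda_0=k(n-k+1)$ is the row sum of $W^TW$. The sequence $\lambda_i$ is nonincreasing in $i$ for $0\le i\le k$, since both factors decrease. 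This spectral input is the main step I would justify carefully. I would either cite the Johnson graph eigenvalues, or derive $\lambda_i$ directly from the identity $W_{k,k+1}W_{k,k+1}^{T}-W_{k-1,k}^{T}W_{k-1,k}=(n-2k)I$ by induction on $k$.

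Now write $x=\frac{|\cE|}{N}\mathbf{1}+x'$ with $x'\perp\mathbf{1}$. Since $x$ is a $0/1$ vector, $\|x\|^2=|\cE|$, and hence $\|x'\|^2=|\cE|-|\cE|^2/N$. Every eigenvalue on $V_1\oplus\cdots\oplus V_m$ is at most $\lambda_1=(k-1)(n-k)$, so
\[\|Wx\|^2\le \lambda_0\frac{|\cE|^2}{N}+\lambda_1\Bigl(|\cE|-\frac{|\cE|^2}{N}\Bigr)=\bigl(\lambda_0-\lambda_1\bigr)\frac{|\cE|^2}{N}+(k-1)(n-k)|\cE|.\]
A short computation gives $\lambda_0-\lambda_1=k(n-k+1)-(k-1)(n-k)=n$. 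Since $n/\binom{n}{k}=k/\binom{n-1}{k-1}$, this is exactly the stated bound.

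For the equality statement, the bound is tight precisely when $x'\in V_1$, that is, when $x\in V_0\oplus V_1$. For the listed families it suffices to check this directly. The complete family $K_k^n$ and the empty family give $x\in V_0$. The indicator of the $1$-star is $\mathbf{1}_{1\in F}$, which is a linear combination of the functions $\mathbf{1}_{i\in F}$ and so lies in $V_0\oplus V_1$; complements are handled by $\mathbf{1}-x$. When $n=k+1$, the indicator of the $t$-star equals $\mathbf{1}-\sum_{i\le t}\mathbf{1}_{i\notin F}$, since each $k$-subset of $[k+1]$ misses exactly one element. This is again linear in the coordinate indicators, hence lies in $V_0\oplus V_1$. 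Alternatively, equality for these families can be verified by substituting them directly into the inequality.
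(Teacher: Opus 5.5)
The paper contains no proof of this statement: Theorem~\ref{thm:bey} is quoted from Bey's paper and used only as a black box, to get Theorem~\ref{thm:ekrl2} and the bound $g(n,k)$ in Theorem~\ref{thm:completeHMstability}. Your spectral argument is therefore a genuinely different route from the paper's, which is a citation. Judged on its own, it is correct and complete, modulo the standard Johnson-scheme spectrum.

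Your identity $W^TW=kI+A$ is exactly Lemma~\ref{lem:reform} in matrix form. So the proof amounts to bounding $e_{J(n,k)}(\cE)$ through the second eigenvalue of $J(n,k)$. The computations $\lambda_0-\lambda_1=n$ and $n/\binom{n}{k}=k/\binom{n-1}{k-1}$ are right. Your fallback induction also goes through, because:
\begin{itemize}
\item $W_{k-1,k}W_{k-1,k}^T=W_{k-2,k-1}^TW_{k-2,k-1}+(n-2k+2)I$;
\item $(k-1-i)(n-k+2-i)+(n-2k+2)=(k-i)(n-k+1-i)$;
\item $WW^T$ and $W^TW$ have the same nonzero eigenvalues with multiplicity.
\end{itemize}
In particular the induction shows that $\lambda_0$ is simple. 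That is the only spectral fact the inequality really needs: every eigenvalue of $W^TW$ on $\mathbf{1}^{\perp}$ is at most $(k-1)(n-k)$.

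Two small tightenings.
\begin{itemize}
\item The claim that $\mathbf{1}_{i\in F}$ lies in $V_0\oplus V_1$ deserves one line. Write $\chi_i$ for the vector $F\mapsto\mathbf{1}_{i\in F}$. Then $A\chi_i=\bigl((k-1)(n-k)-k\bigr)\chi_i+k\mathbf{1}$, so $\chi_i-\frac{k}{n}\mathbf{1}$ is a $\lambda_1$-eigenvector of $W^TW$. This makes the listed equality cases independent of any identification of the spaces $V_i$.
\item Your phrase ``tight precisely when'' needs $\lambda_i<\lambda_1$ for $i\ge 2$. This does hold, since the $\lambda_i$ are strictly decreasing on $0\le i\le m$. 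Only the ``if'' direction is required for the statement anyway.
\end{itemize}

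Compared with citing Bey, your route gives self-containment and an exact equality criterion: equality holds if and only if $x\in V_0\oplus V_1$.
\begin{itemize}
\item For $n=k+1$ this space is everything, since $m=1$ (indeed $J(k+1,k)=K_{k+1}$), so every family is extremal. Every family of $k$-subsets of $[k+1]$ is isomorphic to a $t$-star, or the complement of one, for some $0\le t\le k+1$. So the statement's list for $n=k+1$ in fact covers all families.
\item For $2\le k\le n-2$, compare $f(F)$ with $f(F-i+j)$ for $f=c+\sum_i a_i\chi_i$. This shows that the only $0/1$ vectors of that form are $0$, $\mathbf{1}$, $\chi_i$ and $\mathbf{1}-\chi_i$. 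So your method also yields the converse characterization, which the stated theorem does not assert.
\end{itemize}
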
 

We conjectured in \cite{BL} that a version of the Hilton-Milner theorem should also hold in the $\ell_2$-norm. Hilton and Milner~\cite{HM} determined the maximum size of a nontrivial $k$-uniform intersecting family and completely characterized the equality cases. A family of sets $\cF$ is \emph{nontrivial intersecting} if $\cF$ is intersecting and $\bigcap_{F\in\cF}F = \emptyset$. The Hilton-Milner theorem was the first sharpening of the Erd\H{o}s-Ko-Rado theorem. 

\begin{theorem}[Hilton-Milner]\label{thm:hm}
Let $n$ and $k$ be positive integers and let $\cF \subset \binom{[n]}{k}$ be a nontrivial intersecting family. Then, for $n > 2k$, 
\[|\cF|\le \binom{n-1}{k-1} - \binom{n-k-1}{k-1} + 1.\]
If $k\ge 4$, equality holds only if $\cF \cong \cH(n, k):=\{F\in \binom{[n]}{k}: 1\in F,\, F\cap [2, k+1]\neq\emptyset\} \cup \{[2, k+1]\}$. If $k=3$, equality holds only if $\cF\cong\cH(n, 3)$ or $\cF\cong \cA(n, 3):=\{F\in \binom{[n]}{3}: |F\cap [3]| \ge 2\}.$
\end{theorem}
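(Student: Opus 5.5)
The plan is to follow the shifting (compression) approach of Frankl, which gives a clean proof of the size bound. The equality characterization will then come from tracking where each step can be tight.

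First, I reduce to a shifted family. For $i<j$, the shift $S_{ij}$ replaces $F$ by $(F\setminus\{j\})\cup\{i\}$ whenever $j\in F$, $i\notin F$ and the new set is not already in $\cF$. Shifting preserves $|\cF|$ and the intersecting property, but it can destroy nontriviality. Frankl's standard remedy handles this. If applying $S_{ij}$ would make the family trivial, then every member meets $\{i,j\}$. In that case I stop shifting and work directly with a family $\cF$ in which every set meets a $2$-set $\{i,j\}$ and which is not a star. Otherwise I continue until the family is shifted and still nontrivial.

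Next, I treat the two resulting cases.

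\emph{Case (a): $\cF$ is shifted and nontrivial.} Here $\cF$ contains a set avoiding $1$. Shiftedness forces $[2,k+1]\in\cF$, and hence every $F\ni 1$ in $\cF$ meets $[2,k+1]$. The standard Hilton-Milner counting argument applies: pair $\cF(1)=\{F\setminus\{1\}: 1\in F\in\cF\}$ with $\cF(\bar 1)=\{F: 1\notin F\in\cF\}$, which are cross-intersecting families on $[2,n]$ of uniformities $k-1$ and $k$. By the Kruskal–Katona theorem, or by a Katona cycle argument, this gives $|\cF(1)|+|\cF(\bar 1)|\le \binom{n-1}{k-1}-\binom{n-k-1}{k-1}+1$.

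\emph{Case (b): every set meets $\{1,2\}$, say.} Split $\cF$ into sets containing both $1$ and $2$, sets containing only $1$, and sets containing only $2$. The latter two trace families on $[3,n]$ are $(k-1)$-uniform and cross-intersecting. The bound for cross-intersecting families, $|\cA|+|\cB|\le \binom{n-2}{k-1}-\binom{n-k-1}{k-1}+1$ when both are nonempty, then yields a total at most $\binom{n-2}{k-2}+\binom{n-2}{k-1}-\binom{n-k-1}{k-1}+1$. This is the same value as in Case (a).

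For equality, I trace back through the shifts. In Case (a), equality in the cross-intersecting bound forces $\cF(\bar 1)$ to be the single set $[2,k+1]$, giving $\cH(n,k)$. For $k=3$ it can also force $\cF(\bar 1)$ to be all of $\{F\ni 2,3\}$ together with $\{2,3,4\}$-type configurations, giving $\cA(n,3)$. One must then show that a family which shifts to $\cH(n,k)$ or $\cA(n,3)$, with each shift size-preserving, was already isomorphic to one of these.

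I expect this un-shifting of the extremal configurations, and separating $k=3$ from $k\ge4$, to be the main obstacle. I would handle it by checking that $\cH(n,k)$ and $\cA(n,3)$ are ``rigid'': any nontrivial intersecting family whose $S_{ij}$-image is one of them must itself be isomorphic to it. This uses that $\cH(n,k)$ has a unique set avoiding its centre, and that $\cA(n,3)$ is determined by its three degree-maximal vertices.
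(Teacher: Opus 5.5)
The paper does not prove this statement; it is quoted from Hilton and Milner's original paper, so I judge your outline on its own. Your reduction is sound: if $S_{ij}(\cF)$ becomes a star, its centre must be $i$, so $\{i,j\}$ meets every member of $\cF$. Case (b) is also correct. Both trace families are nonempty because $\cF$ is not a star, they live on $n-2\ge 2(k-1)$ points so the cross-intersecting Hilton--Milner inequality applies, and $\binom{n-2}{k-2}+\binom{n-2}{k-1}=\binom{n-1}{k-1}$.

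The genuine gap is Case (a), which is where the substance of the theorem lies. Cross-intersection of the $(k-1)$-uniform $\cF(1)$ with the $k$-uniform $\cF(\bar 1)$, even together with $[2,k+1]\in\cF(\bar 1)$, does not imply your inequality. Take $\cF(1)=\{[2,k]\}$ and $\cF(\bar 1)=\{B\in\binom{[2,n]}{k}: B\cap[2,k]\neq\emptyset\}$. These satisfy all of your stated conditions, yet $|\cF(1)|+|\cF(\bar 1)|=\binom{n-1}{k}-\binom{n-k}{k}+1$. For $n=2k+1$ this already exceeds the Hilton--Milner value by $\frac{1}{k+1}\binom{2k}{k}-1>0$, and for large $n$ it has order $n^{k-1}$ rather than $n^{k-2}$. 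So no Kruskal--Katona or cycle argument that uses only cross-intersection can finish. You must bring in a property that rules out a tiny $\cF(1)$ paired with a huge $\cF(\bar 1)$, and then carry out a real estimate with it. For shifted $\cF$ the natural properties are these: $\cF(\bar 1)$ is $2$-intersecting (if $F\cap G=\{x\}$ with $1\notin F\cup G$, then $(G\setminus\{x\})\cup\{1\}\in\cF$ misses $F$); the $(k-1)$-shadow of $\cF(\bar 1)$ is contained in $\cF(1)$; and $1$ has maximum degree. Nor can Case (a) be pushed into Case (b). In a shifted family some $2$-set meets every member only if $\{1,2\}$ does, and shifted nontrivial families with no such $2$-set do occur, for example $\binom{[2k-1]}{k}$.

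The equality part is also only a plan. The ``rigidity'' of $\cH(n,k)$ and $\cA(n,3)$ under $S_{ij}$-preimages is the entire uniqueness statement. The two facts you cite do not by themselves show that an intersecting preimage is isomorphic to one of these families. You would also need the equality cases of the cross-intersecting inequality in Case (b). For $k=3$ the traces are graphs, and besides ``one edge versus all edges meeting it'' there is the configuration where both traces are the same star. That configuration is exactly where $\cA(n,3)$ comes from, with cover $\{1,2\}$. (Note that for $\cA(n,3)$ one has simply $\cF(\bar 1)=\{\{2,3,x\}: x\ge 4\}$.)
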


Like the original Erd\H{o}s-Ko-Rado theorem, the theorem of Hilton and Milner has been extended and generalized in many different ways. For example, versions of the Hilton-Milner theorem have been proven for nontrivial $t$-intersecting, $d$-wise intersecting, and $d$-wise $t$-intersecting families \cite{AK2, OV, BaL}, subspaces~\cite{BBCFMPS, CLWZ, WXY}, and the bilinear forms graph~\cite{GLW, CLW}. 

In this paper, we prove that the families $\cH(n, k)$ and $\cA(n, k)$ (for $k=3$) also have maximum codegree squared sum among nontrivial intersecting $k$-uniform families.

\begin{theorem}[Hilton-Milner in $\ell_2$-norm]\label{thm:hml2}
Let $\cF \subset \binom{[n]}{k}$ be a nontrivial intersecting family. Then, for $k\ge 3$ and $n > 2k$, 
\[\co(\cF) \le \co(\cH(n, k)),\]
with equality if and only if $\cF \cong \cH(n, k)$ if $k\ge 4$, and if and only if $\cF \cong \cH(n, 3)$ or $\cF\cong \cA(n, 3)$ if $k=3$.

If $n=2k$, then 
\[\co(\cF) \le k^2\binom{2k-1}{k-1},\]
and equality holds only if $\cF \cong \binom{[2k-1]}{k}$. 
\end{theorem}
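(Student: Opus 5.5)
Writing $m=|\cF|$, the plan is to combine Bey's inequality (Theorem~\ref{thm:bey}) with the Hilton--Milner bound (Theorem~\ref{thm:hm}) wherever the family is small, and to run a separate structural analysis when the family is large. Bey's inequality gives
\[
\co(\cF)\le \frac{k}{\binom{n-1}{k-1}}m^2+(k-1)(n-k)m,
\]
which is increasing in $m$. So for a nontrivial intersecting $\cF$ we get $\co(\cF)\le g(m_{HM})$, where $m_{HM}=\binom{n-1}{k-1}-\binom{n-k-1}{k-1}+1$ and $g$ denotes the right-hand side. This bound alone is not sharp for $\cH(n,k)$, because $\cH(n,k)$ is not an equality case of Bey's inequality. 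First I would compute $\co(\cH(n,k))$ exactly. For $J$ containing $1$, $d(J)=n-k+1$ when $J\setminus\{1\}$ meets $[2,k+1]$; otherwise it is about $k$ or $k+1$. For $J$ not containing $1$, $d(J)\in\{0,1,2\}$ according to whether $J$ meets $[2,k+1]$ and whether $J\subset[2,k+1]$. Then I would compare this value with $g(m)$ to find a threshold $m_0<m_{HM}$ with $g(m)<\co(\cH(n,k))$ for all $m\le m_0$. This settles every family of size at most $m_0$ with strict inequality.

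For large families, $|\cF|>m_0$, I would use a stability step. I expect that $m_0$ can be taken to be roughly $\binom{n-1}{k-1}-c\binom{n-k-1}{k-1}$. A Frankl-type degree result (or the Hilton--Milner structure refined by shifting) then says such an $\cF$ has a vertex $x$ of large degree with few sets avoiding $x$. Let $\cB=\{F\in\cF: x\notin F\}$, which is nonempty by nontriviality. Every set containing $x$ must meet every member of $\cB$. I would decompose $\co(\cF)$ into the contribution of $(k-1)$-sets containing $x$ and those avoiding $x$. The first is governed by the link family $\cF(x)\subset\binom{[n]\setminus\{x\}}{k-1}$, with $d(J)\le n-k+1$. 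The second is small because it involves only $\cB$ and the sets through $x$. A direct counting or exchange argument would then show the sum is maximized when $|\cB|=1$ and $\cF(x)$ is all $(k-1)$-sets meeting the unique $B$, which gives $\cH(n,k)$. For $k=3$ I would treat $|\cB|\ge 2$ separately and recover $\cA(n,3)$ as the other equality case.

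For $n=2k$ the argument is different. Every intersecting family in $\binom{[2k]}{k}$ has at most one set from each complementary pair. Theorem~\ref{thm:ekrl2} gives the bound $\binom{2k-1}{k-1}(1+(k+1)(k-1))=k^2\binom{2k-1}{k-1}$, with equality only for the star and for $\binom{[2k-1]}{k}$. The star is trivial, so only the complement $\binom{[2k-1]}{k}$ remains, and it is nontrivial intersecting. This settles the $n=2k$ case at once.

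The main obstacle is the large-family regime. There the Bey bound is too weak, so I must control the codegree contributions precisely enough to force $|\cB|=1$ (or the $\cA(n,3)$ structure). My first attempt would be to assume $\cF$ is maximal and shifted. Shifting does not decrease $\co$ (I would check this using a pairing of $(k-1)$-sets), but it may destroy nontriviality, so I would use the Frankl-style shifting that stops just before a trivial family appears. With the family shifted, I would compare $\co(\cF)$ against $\co(\cH(n,k))$ by an explicit computation over the few possible shifted structures.
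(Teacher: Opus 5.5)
There is a genuine gap in the case $n>2k$. Your handling of $n=2k$ is exactly the paper's: it is immediate from Theorem~\ref{thm:ekrl2}, since the star is trivial.

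For $n>2k$, the whole difficulty lies in your ``large-family regime'', and there you only assert that ``a direct counting or exchange argument'' forces $|\cB|=1$. The dichotomy behind that step also fails.

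For large $n$, Bey's bound at $m_{HM}$ exceeds $\co(\cH(n,k))$ by a constant factor, not by a lower-order amount. Its main term is $(k-1)(n-k)m_{HM}\approx k(k-1)^2\binom{n-1}{k-1}$, while $\co(\cH(n,k))\approx k(k-1)(k-2)\binom{n-1}{k-1}$. So your threshold is only $m_0\approx\frac{k-2}{k-1}m_{HM}$, and families above it need not be close to a star.

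For example, take $k=4$ and the family $\{F\in\binom{[n]}{4}: |F\cap[3]|\ge 2\}$. Its size is about $3\binom{n}{2}$, which exceeds $\frac{2}{3}m_{HM}\approx\frac{8}{3}\binom{n}{2}$. Bey's bound for it is about $\frac{9}{2}n^3$, which exceeds $\co(\cH(n,4))\approx 4n^3$, so Bey cannot exclude it. Yet its diversity is about a third of its size, so there is no vertex with few sets avoiding it. The same is true of $\cA(n,3)$.

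Shifting does not rescue this. Shifted nontrivial intersecting families do not reduce to ``a few possible structures''. Comparing their $\co$ with $\co(\cH(n,k))$ is the original problem.

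The paper avoids any structural classification by using two ingredients you lack.

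\begin{itemize}
\item \textbf{Diversity.} A nontrivial $\cF$ not contained in a copy of $\cH(n,k)$ has $\gamma(\cF)\ge 2$. Theorem~\ref{thm:hm2} then gives the sharper bound $|\cF|\le m_{HM}-\binom{n-k-2}{k-2}+1$. Feeding this, rather than $m_{HM}$, into Bey's inequality yields the bound $g(n,k)$.
\item \textbf{Covers.} The paper controls high codegrees directly via $\co(\cF)\le k^2|\cF|+\sum_{d(J)\ge k+1}d(J)(d(J)-k)$. Every $(k-1)$-set with $d(J)\ge k+1$ is a cover of $\cF$. For maximal $\cF$ and $n>2k$, the $(k-1)$-covers form an intersecting family. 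Because $\cF$ is not contained in a copy of $\cH(n,k)$, this family has at most $\binom{n-1}{k-2}-2\binom{n-k-1}{k-2}+\binom{n-k-2}{k-2}$ members. This follows from Hilton--Milner if the cover family is nontrivial, and otherwise from two members of $\cF$ avoiding its centre. This gives the bound $f(n,k)$.
\end{itemize}

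The two bounds are complementary: $g(n,k)>0$ whenever $f(n,k)\le 0$, which forces $n<k^2$. The case $k=3$ is taken from earlier work rather than derived.
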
 

We have previously proven that Theorem~\ref{thm:hml2} is true when $n=2k$ (by Theorem~\ref{thm:ekrl2}) and when $k=3$~\cite{BL}. Wu and Zhang~\cite{WZ} proved an $\ell_2$-norm version of the $t$-intersecting Hilton-Milner theorem of Ahlswede and Khachatrian~\cite{AK2} for $t\ge 2$ and stated that their argument also proves Theorem~\ref{thm:hml2} for $n\ge 3k$. Although we therefore only need to cover the range $2k < n < 3k$ to resolve the unknown cases of Theorem~\ref{thm:hml2}, we give a proof which covers the entire range $n > 2k$. One reason is that our proof can be generalized to obtain a version of the Hilton-Milner theorem in the $(j, p)$-norm.

The \emph{$(j, p)$-norm} of a hypergraph $\cF$ is a generalization of the $\ell_2$-norm that was first systematically studied by Chen, Il'kovi\v{c}, Le\'on, Liu, and Pikhurko~\cite{CILLP}. For a hypergraph $\cF$, the $(j, p)$-norm is defined as the $p$th power of the $\ell_p$-norm of the codegree vector whose entries are the codegrees of the sets $J \in \binom{[n]}{j}$
\[\cojp(\cF) := \sum_{J \in \binom{[n]}{j}} d(J)^p.\]
Recently, there has been quite a lot of interest in finding $(j, p)$-norm versions of classical Tur\'an-type results. Within extremal set theory, Zhao and Wang~\cite{ZW} and Cao, Lu and Zhang~\cite{CLZ} have independently proven versions of the Erd\H{o}s-Ko-Rado theorem in the $(j, p)$-norm, extending Theorem~\ref{thm:ekrl2}. 

We now give our version of the Hilton-Milner theorem in the $(j, p)$-norm. 

\begin{theorem}\label{thm:hmjp}
    Let $k\ge 4$ be an integer, let $1\le j\le k-1$ be an integer, and let $p > 1$ be a real number. There exists $n_0 = n_0(k, p, j)$ such that for any nontrivial intersecting family $\cF \subset \binom{[n]}{k}$ with $n\ge n_0$, we have 
    \[\cojp(\cF) \le \cojp(\cH(n, k)).\]
    Equality holds if and only if $\cF \cong \cH(n, k).$
\end{theorem}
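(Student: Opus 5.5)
The plan is to reduce to a stability-type dichotomy valid for $n\ge n_0$, according to the maximum degree of $\cF$. First I compute $\cojp(\cH(n,k))$ asymptotically. Every $j$-set $J$ containing $1$ has codegree $\binom{n-j}{k-j} - O(n^{k-j-1})$. The $j$-sets avoiding $1$ that meet $[2,k+1]$ have codegree of order $n^{k-j-1}$, and the rest have codegree at most $1$. Hence
\[\cojp(\cH(n,k)) = \binom{n-1}{j-1}\binom{n-j}{k-j}^p - O\bigl(n^{j-1+p(k-j)-1}\bigr),\]
and the leading loss relative to the full star is of order $n^{j-1}\cdot n^{(p-1)(k-j)}\cdot n^{k-j-1}$. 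To see this, note that the missing sets $\{1\}\cup G$ with $G\cap[2,k+1]=\emptyset$ remove from a typical $J\ni 1$ about $\binom{n-k-j}{k-j}$ of its $\binom{n-j}{k-j}$ sets. I will keep the exact main terms, since equality must eventually be decided by the small structures.

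\textbf{Case 1: $\cF$ has no vertex of degree at least $\binom{n-1}{k-1}-c\,n^{k-2}$ (far from a star).} By the Frankl/Kupavskii-type degree bounds, a nontrivial intersecting family has $|\cF|=O(n^{k-2})$ unless some vertex has large degree. More directly, $\cF$ has covering number $\tau(\cF)\ge 2$. If $\tau\ge 3$, then $|\cF|=O(n^{k-3})$ and every codegree is $O(n^{k-j-2})$ except on $O(1)$ structure, which gives $\cojp$ far below $n^{j+p(k-j)-1}$. If $\tau(\cF)=2$ but no vertex is in almost all sets, I bound codegrees by $d(J)\le \sum_{x} d(J\cup\{x\})$ over a cover. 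Using $\sum_J d(J)^p \le (\max_J d(J))^{p-1}\sum_J d(J) = (\max_J d(J))^{p-1}\binom{k}{j}|\cF|$, I will get a bound of the form $(1-\varepsilon)\binom{n-1}{j-1}\binom{n-j}{k-j}^p$, which is smaller than the target once $n$ is large.

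\textbf{Case 2: some vertex, say $1$, lies in almost all sets of $\cF$.} Let $\cF_0=\{F\in\cF:1\in F\}$ and $\cF_1=\cF\setminus\cF_0\ne\emptyset$. Every set of $\cF_0$ must meet every $B\in\cF_1$. If $|\cF_1|=1$ with $\cF_1=\{B\}$, then $\cF\subseteq\cH(n,k)$ up to isomorphism, and strict monotonicity of $\cojp$ finishes. If $|\cF_1|\ge2$, pick distinct $B_1,B_2\in\cF_1$. Then $\cF_0$ misses every set $\{1\}\cup G$ with $G$ disjoint from $B_1$ or from $B_2$, which is at least $\binom{n-k-1}{k-1}+\binom{n-k-1}{k-1}-\binom{n-|B_1\cup B_2|-1}{k-1}$ sets. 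This excess over $\cH(n,k)$'s missing family is of order $n^{k-2}$ (as $|B_1\cup B_2|\ge k+1$).

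I then compare losses. Removing a set $\{1\}\cup G$ decreases $\sum_{J\ni1}d(J)^p$ by roughly $\binom{k-1}{j-1}\,p\binom{n-j}{k-j}^{p-1}$. The additional removed sets therefore cost order $n^{k-2}\cdot n^{(p-1)(k-j)}$. The gain from $\cF_1$ is at most $\sum_{J\not\ni 1}d(J)^p$, and codegrees there are $O(n^{k-j-1})$ because all sets of $\cF_1$ meet a fixed $B_1$. Using the same max-times-sum bound with $|\cF_1|=O(n^{k-2})$, this gain is $O\bigl(n^{(p-1)(k-j-1)}n^{k-2}\bigr)$, which is smaller by a factor $n^{p-1}$. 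So Case 2 with $|\cF_1|\ge 2$ is strictly worse for large $n$.

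\textbf{Main obstacle.} I expect the hardest step to be making Case 1 rigorous uniformly in $j$ and $p$, in particular the intermediate regime where the maximum degree is between $\varepsilon\binom{n-1}{k-1}$ and $\binom{n-1}{k-1}-cn^{k-2}$. There, the "far from star" bound and the Case 2 perturbation bound have to overlap. My first attempt will be the Frankl degree theorem: if $\Delta(\cF)\le\binom{n-1}{k-1}-\binom{n-i-1}{k-1}$, then $|\cF|\le$ the corresponding Hilton–Milner-type bound. Combined with convexity of $x\mapsto x^p$ applied along the degree of vertex $1$, this should give the required strict inequality, and tracking the equality case through both regimes should show that equality forces $\cF\cong\cH(n,k)$.
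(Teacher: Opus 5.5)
\textbf{There is a genuine gap: your estimate of the extremal value is off by a full power of $n$, and the case split built on it never reaches the real comparison.}

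In $\cH(n,k)$, a $j$-set $J\ni 1$ with $J\cap[2,k+1]=\emptyset$ has codegree $\binom{n-j}{k-j}-\binom{n-k-j}{k-j}=\Theta(n^{k-j-1})$. It does not have codegree $\binom{n-j}{k-j}-O(n^{k-j-1})$, because the discarded sets $\{1\}\cup G$ with $G\cap[2,k+1]=\emptyset$ make up almost all of the star ($\binom{n-k-1}{k-1}/\binom{n-1}{k-1}\to 1$). Only the roughly $k\binom{n-2}{j-2}$ sets $J\ni 1$ meeting $[2,k+1]$ keep full codegree. Hence $\cojp(\cH(n,k))=\bigl(\frac{k}{(j-2)!(k-j)!^p}+o(1)\bigr)n^{j-2+p(k-j)}$ for $2\le j\le k-1$, and $(k^p+k+o(1))\binom{n-2}{k-2}^p$ for $j=1$. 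Your main term $\binom{n-1}{j-1}\binom{n-j}{k-j}^p$ is larger by a factor of order $n$ (order $n^p$ when $j=1$).

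This breaks the case split. Every nontrivial intersecting family has $|\cF|\le\binom{n-1}{k-1}-\binom{n-k-1}{k-1}+1=O(n^{k-2})$, so for large $n$ no vertex has degree $\binom{n-1}{k-1}-cn^{k-2}$. Thus your Case 2 is empty and $\cH(n,k)$ itself lies in Case 1. A Case 1 bound of the form $(1-\varepsilon)\binom{n-1}{j-1}\binom{n-j}{k-j}^p$ then holds trivially for every nontrivial family and decides nothing. The ``intermediate regime'' you flag as the main obstacle does not occur. Separately, the Case 2 linearization $p\binom{n-j}{k-j}^{p-1}$ per removed set is only meaningful for $J$ of nearly full codegree, which is not the typical situation.

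\textbf{Your max-times-sum tool is too coarse, and the missing idea is to separate full-codegree $j$-sets.} After correcting the scale, you must compare leading constants at order $n^{j-2+p(k-j)}$. Take $2\le j\le k-1$, $\max_J d(J)=\binom{n-j}{k-j}$ and $|\cF|\le(1+o(1))k\binom{n-2}{k-2}$. Then $(\max_J d(J))^{p-1}\binom{k}{j}|\cF|$ is asymptotically $\frac{k(k-1)}{j(j-1)}>1$ times $\cojp(\cH(n,k))$. With the sharper bound $|\cF|\le(1+o(1))(k-1)\binom{n-2}{k-2}$ valid when $\gamma(\cF)\ge 2$, it is still $\frac{(k-1)^2}{j(j-1)}>1$ times it.

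For a maximal $\cF$, a $j$-set that covers $\cF$ has codegree exactly $\binom{n-j}{k-j}$. A non-cover has codegree at most $k\binom{n-j-1}{k-j-1}$, so max-times-sum applied only to non-covers gives a negligible $O(n^{j-2+p(k-j)-(p-1)})$. Everything then reduces to counting the family $\cC_j$ of $j$-covers:
\begin{enumerate}
\item $\cC_j$ is intersecting, by maximality of $\cF$ and $n>2k$.
\item If $\cF$ is not contained in a copy of $\cH(n,k)$, then $|\cC_j|\le\frac{k-1}{(j-2)!}n^{j-2}+O(n^{j-3})$. If $\cC_j$ is nontrivial this follows from Hilton--Milner. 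If $\cC_j$ is a star centered at $i$, each $J\setminus\{i\}$ must meet two distinct members of $\cF$ avoiding $i$.
\item For $\cH(n,k)$ the count is $(1+o(1))\frac{k}{(j-2)!}n^{j-2}$, which gives the strict gap.
\end{enumerate}

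For $j=1$ you need the analogue through the graph $G$ of $2$-covers. $G$ is a star with at most $k-1$ edges or a triangle, and $d_{\cF}(v)=d_G(v)\binom{n-2}{k-2}+O(n^{k-3})$. This yields $\max\{(k-1)^p+k-1,\,3\cdot 2^p\}<k^p+k$.

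Your reduction via strict monotonicity when $\cF$ lies inside a copy of $\cH(n,k)$ is fine and matches the paper.
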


We can also prove an exact result for nontrivial intersecting $3$-uniform families. 

\begin{proposition}\label{thm:jpk=3}
    If $\cF \subset \binom{[n]}{3}$ is a nontrivial intersecting family, then for all $n\ge 7$ and $j\in \{1, 2\}$ and $p > 1$,
    \[\cojp(\cF) \le \begin{cases} \cojp(\cA(n, 3)) & \text{if } 1 < p < 2\\ \cojp(\cH(n, 3))& \text{if } p\ge 2.\end{cases}\]
    For $1 < p < 2$, equality holds only if $\cF \cong \cA(n, 3)$; for $p=2$, equality holds only if $\cF \cong \cA(n, 3)$ or $\cF \cong \cH(n, 3)$; for $p > 2$, equality holds only if $\cF \cong \cH(n, 3)$. 
\end{proposition}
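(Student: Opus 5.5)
Since $\cojp$ is monotone under taking subfamilies (removing an edge strictly decreases some codegree), it suffices to compare the \emph{maximal} nontrivial intersecting $3$-uniform families on $[n]$. I will use the classical classification of these (due to Frankl; also in Kostochka–Mubayi). For $n\ge 7$, every nontrivial intersecting $\cF\subset\binom{[n]}{3}$ is contained in a copy of $\cH(n,3)$, in a copy of $\cA(n,3)$, or in one of finitely many exceptional families. Each exceptional family has at most $10$ edges and lives on a bounded vertex set, e.g. the Fano plane and the other $\tau(\cF)=3$ families on at most $7$ vertices. The proof then has three parts: (i) compute $\cojp$ for $\cH(n,3)$ and $\cA(n,3)$ exactly and compare; (ii) show the exceptional families are strictly smaller; (iii) read off the equality cases, using that a proper subfamily is strictly smaller.

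For (i), write $\cH(n,3)=\{F:1\in F,\ F\cap\{2,3,4\}\neq\emptyset\}\cup\{\{2,3,4\}\}$ and $\cA(n,3)=\{F:|F\cap[3]|\ge2\}$.

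\emph{The case $j=2$.} In $\cH(n,3)$ the pair codegrees are:
\begin{itemize}
\item $n-2$ for $\{1,a\}$ with $a\in\{2,3,4\}$;
\item $3$ for $\{1,x\}$ with $x\ge5$;
\item $2$ for pairs inside $\{2,3,4\}$;
\item $1$ for $\{a,x\}$ with $a\in\{2,3,4\}$, $x\ge5$;
\item $0$ otherwise.
\end{itemize}
In $\cA(n,3)$ the codegrees are $n-2$ for the three pairs inside $[3]$, $2$ for $\{a,x\}$ with $a\in[3]$, $x\ge4$, and $0$ otherwise. The common term $3(n-2)^p$ cancels, leaving
\[\cotwop(\cA)-\cotwop(\cH)=(n-4)\bigl(-3\cdot1^p+3\cdot2^p-3^p\bigr).\]

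\emph{The case $j=1$.} In $\cH(n,3)$ the vertex degrees are $3n-9$ for vertex $1$, $n-1$ for $2,3,4$, and $3$ for each $x\ge5$. In $\cA(n,3)$ they are $2n-5$ for $1,2,3$ and $3$ for each $x\ge4$. Hence
\[\coonep(\cA)-\coonep(\cH)=3^p-3(n-1)^p+3(2n-5)^p-(3n-9)^p.\]

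\emph{Sign analysis via the generalized Descartes rule.} A real exponential sum $\sum c_i b_i^p$ with distinct positive bases has at most as many real zeros as there are sign changes in its coefficients, the bases being ordered increasingly.
\begin{itemize}
\item For $j=2$ the bases are $1<2<3$ with signs $-,+,-$. This gives at most two zeros, and $p=1,2$ are zeros.
\item For $j=1$ and $n\ge7$ the bases are $3<n-1<2n-5<3n-9$ with signs $+,-,+,-$. This gives at most three zeros, and one checks directly that $p=0,1,2$ are zeros.
\end{itemize}
In both cases the zeros are therefore simple and exhausted. The difference is negative as $p\to\infty$, since the largest base carries a negative coefficient. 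So it is positive on $(1,2)$, zero at $p=2$, and negative for $p>2$. This gives exactly the dichotomy in the statement.

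For (ii), an exceptional family $\cE$ has at most $10$ edges on at most $7$ vertices. For $j=2$ every codegree is at most $2$ or $3$, so $\cotwop(\cE)\le 30\cdot 3^{p-1}$. For $j=1$, $\sum d(v)=3|\cE|\le30$ and each $d(v)\le|\cE|$, so by convexity $\coonep(\cE)$ is at most the value for the most concentrated degree sequence. I would compare these bounds against the smaller of $\cojp(\cH),\cojp(\cA)$. That minimum already contains terms such as $3(2n-5)^p$ with $2n-5\ge9$, or $3(n-2)^p+3(n-3)2^p$. I expect the main obstacle to be this step: the comparison must hold uniformly for all $p>1$ and all $n\ge7$, and it is tightest at $n=7$ and $p$ near $1$. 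There I would check the finitely many exceptional families explicitly (their degree and codegree sequences are known), again using the exponential-sum sign-change count. By monotonicity in $n$, the case $n=7$ suffices.

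Finally, for (iii), combining (i) and (ii) with strict monotonicity under edge removal yields the stated equality cases. For $1<p<2$ equality forces $\cF\cong\cA(n,3)$, for $p>2$ it forces $\cF\cong\cH(n,3)$, and for $p=2$ both families attain equality.
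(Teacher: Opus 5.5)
\textbf{Part (i).} Your computations here are correct, and your sign argument differs from the paper's. The paper uses strict concavity/convexity of $t^{p-1}$; for $j=1$ it first differentiates in $n$, starting from $f(4)=0$. You instead count real zeros in $p$ of an exponential sum via the Laguerre--Descartes rule. The zeros are exactly $p=1,2$ (resp.\ $p=0,1,2$), all simple, and the sum is negative as $p\to\infty$. This is a clean, uniform alternative.

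\textbf{The gap is in (ii).} It rests on a misstated classification. It is false that every maximal nontrivial intersecting family other than $\cH(n,3)$ and $\cA(n,3)$ has at most $10$ edges on a bounded vertex set. Your description fits the families with covering number $3$, but not those with covering number $2$. If $\cF$ is maximal with a $2$-cover $\{a,b\}$, then $\cF$ consists of:
\begin{itemize}
\item all triples containing $\{a,b\}$;
\item the triples $\{a\}\cup e$ with $e\in G_a$ and $\{b\}\cup e$ with $e\in G_b$, where $(G_a,G_b)$ is a maximal cross-intersecting pair of graphs on $[n]\setminus\{a,b\}$.
\end{itemize}
Besides the pairs giving $\cH(n,3)$ and $\cA(n,3)$, there are four more:
\begin{itemize}
\item $(K_3,K_3)$, $(C_4,2K_2)$ and $(P_4,P_4)$, each giving $n+4$ triples;
\item $G_a=$ the full star at $c$ plus the edge $de$, with $G_b=\{cd,ce\}$, giving $2n-2$ triples.
\end{itemize}
For example,
$\{\{a,b,x\}: x\ne a,b\}\cup\{\{a,c,d\},\{a,c,e\},\{a,d,e\},\{b,c,d\},\{b,c,e\},\{b,d,e\}\}$
is maximal intersecting and nontrivial. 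It is isomorphic to neither $\cH(n,3)$ nor $\cA(n,3)$, and it already has $11$ edges when $n=7$. These families are among the twelve that the paper must check via the Polcyn--Ruci\'nski classification.

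\textbf{Why your estimates fail for them.} Their degrees and codegrees grow with $n$. Take $\cF'=\{\{a,b,x\}\}\cup\{\{a,c,x\}\}\cup\{\{a,d,e\},\{b,c,d\},\{b,c,e\}\}$. It has $d(a)=2n-4$, $d(b)=d(c)=n$ and $d(\{a,b\})=d(\{a,c\})=n-2$. So neither the bound $30\cdot 3^{p-1}$ nor the reduction to $n=7$ by monotonicity in $n$ is available. Their norms must be compared with those of $\cH(n,3)$ and $\cA(n,3)$ as explicit functions of both $n$ and $p$.

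\textbf{Your yardstick is also too strong.} You propose comparing against $\min\{\cojp(\cH(n,3)),\cojp(\cA(n,3))\}$, but this fails for $\cF'$. We have $\coonep(\cF')=(2n-4)^p+2n^p+2\cdot 4^p+(n-5)2^p$ and $\coonep(\cA(n,3))=3(2n-5)^p+(n-3)3^p$. Since $\bigl((2n-4)/(2n-5)\bigr)^p\to\infty$, the first exceeds the second for all sufficiently large $p$; at $n=7$ this already happens at $p=10$. The correct target, which the paper verifies, is $\cojp(\cF)<\cojp(\cH(n,3))$ for every other maximal family. This suffices for all $p>1$, because by your part (i) $\cojp(\cH(n,3))\le\cojp(\cA(n,3))$ when $p\le 2$. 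Once these four covering-number-$2$ families are added and compared against $\cojp(\cH(n,3))$, your outline matches the paper's argument.
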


\section{Proof of Hilton-Milner theorem in \texorpdfstring{$\ell_2$-norm}{}}

We first record some useful lemmas, and then give the proof of Theorem~\ref{thm:hml2}. 

\subsection{Preliminaries}

We give an exact formula for the codegree squared sum of the Hilton-Milner family $\cH(n, k)$. One can determine $\co(\cH(n, k))$ by determining the codegree of each set in $\binom{[n]}{k-1}$ and then performing tedious algebraic manipulation. In the sequel, we opt for a more combinatorial approach to determine $\co(\cH(n, k))$. 

Let $\cF \subset \binom{[n]}{k}$ be an intersecting family. For $i\in [n]$, the set $\cF(i):= \{F\in \cF: \, i\in F\}$ is the family of all sets in $\cF$ containing $i$. The maximum degree of the family $\cF$ is $\Delta(\cF):= \max_{1\le i\le n}|\cF(i)|$. The \emph{diversity} of the family $\cF$ is the quantity $\gamma(\cF):= |\cF| - \Delta(\cF)$. After relabeling, we can assume that the maximum degree vertex in $\cF$ is $1$. Let $\cA = \{F \setminus \{1\}: 1\in F\in \cF\} \subseteq \binom{[2, n]}{k-1}$ and $\cB = \{F\in \cF: 1\notin F\} \subseteq \binom{[2, n]}{k}$. Observe that $\cA$ and $\cB$ are cross-intersecting. Let us also note that $\gamma(\cF) = 0$ if and only if $\cF$ is a star and $\gamma(\cF) = 1$ if and only if $\cF$ is nontrivial and $\cF \subset \cH(n, k)$. Indeed, if $\gamma(\cF) = 1$, then let $B$ be the unique set in $\cB$. Every set of the form $\{1\} \cup A$, $A\in \cA$ must intersect $B$, proving that $\cF \subset \cH(n, k)$ (after relabeling the elements of $B$). Let $\cI(\cA, \cB):= \{(A, B): A\in \cA,\, B\in \cB,\, A\subset B\}$. 

 The \emph{Johnson graph} $J(n, k)$ is the graph with vertex set $V(J(n, k)) = \binom{[n]}{k}$ and edge-set $E(J(n, k)) = \{AB: |A\cap B| = k-1\}$. Henceforth, for integers $n$ and $k$ and a family $\cF \subseteq \binom{[n]}{k}$, we use the notation $e_{J(n, k)}(\cF)$ to denote the number of edges in the induced subgraph $J(n, k)[\cF]$ of the Johnson graph $J(n, k)$. 

We recall an alternate, useful expression for the codegree squared sum~\cite{BL}.

\begin{lemma}[\protect{\cite[Lemma 2.1]{BL}}]\label{lem:reform}
For any $\cF \subseteq \binom{[n]}{k}$,
\begin{align*}
\co(\cF) &= k|\cF|+ 2 \left|\left\{\{F, F'\} \in \binom{\cF}{2} : |F\cap F'| = k -1 \right\}\right|. \\
&=k|\cF| + 2e_{J(n, k)}(\cF). 
\end{align*}
\end{lemma}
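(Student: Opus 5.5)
We prove the identity by double counting: write each codegree as a sum of indicators, square it, and sort the resulting ordered pairs of edges according to whether they are equal or distinct. For a $(k-1)$-set $J$ we have $d(J) = \sum_{F\in\cF} \mathbbm{1}[J\subset F]$. Hence
\[
d(J)^2 = \sum_{(F,F')\in \cF\times\cF} \mathbbm{1}[J\subset F]\,\mathbbm{1}[J\subset F'] ,
\]
which is the number of ordered pairs $(F,F')$ of members of $\cF$ that both contain $J$. Summing over $J\in\binom{[n]}{k-1}$ and exchanging the order of summation gives
\[
\co(\cF) = \sum_{(F,F')\in\cF\times\cF} \left|\left\{J\in \tbinom{[n]}{k-1} : J\subset F\cap F'\right\}\right| .
\]

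First take the diagonal pairs $F=F'$. A $k$-set has exactly $\binom{k}{k-1}=k$ subsets of size $k-1$, so these pairs contribute $k|\cF|$.

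Now take $F\neq F'$. Two distinct $k$-sets satisfy $|F\cap F'|\le k-1$. The intersection contains a $(k-1)$-subset only if $|F\cap F'|=k-1$, and then the intersection itself is the unique such $(k-1)$-set. So an off-diagonal ordered pair contributes $1$ exactly when $|F\cap F'|=k-1$, and $0$ otherwise. Each unordered pair $\{F,F'\}$ corresponds to two ordered pairs, so the off-diagonal total is
\[
2\left|\left\{\{F,F'\}\in\tbinom{\cF}{2} : |F\cap F'|=k-1\right\}\right| .
\]
This proves the first equality.

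The second equality is just the definition of the Johnson graph $J(n,k)$. Its edges join two $k$-sets exactly when they meet in $k-1$ elements, so the unordered pairs above are precisely the edges of the induced subgraph $J(n,k)[\cF]$. Their number is therefore $e_{J(n,k)}(\cF)$.

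The argument has no real obstacle. The only point needing care is to count ordered pairs, so that the factor $2$ comes out correctly, and to handle the diagonal pairs separately, since they produce the linear term $k|\cF|$.
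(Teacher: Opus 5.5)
Your proof is correct and is essentially the paper's argument. The paper sums the identity $d(J)^2 = d(J) + 2\binom{d(J)}{2}$ over all $J$, which is exactly your split of ordered pairs $(F,F')$ into diagonal pairs (contributing $k|\cF|$) and off-diagonal pairs (each unordered pair with $|F\cap F'|=k-1$ sharing exactly one $(k-1)$-set).
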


Lemma~\ref{lem:reform} is a consequence of summing the identity $d(J)^2 = d(J) + 2\binom{d(J)}{2}$ over all sets $J \in \binom{[n]}{k-1}$.

We use Lemma~\ref{lem:reform} to give an alternative formulation of $\co(\cF)$ based on this decomposition.

\begin{lemma}\label{lem:divdecomp}
For any intersecting family $\cF \subset \binom{[n]}{k}$,
\begin{align*}
    \co(\cF) &= k|\cA| + k|\cB| + 2e_{J(n-1, k-1)}(\cA) + 2e_{J(n-1, k)}(\cB) + 2|\cI(\cA, \cB)|\\
    &\le k|\cA| + 3k|\cB| +2e_{J(n-1, k-1)}(\cA) + 2e_{J(n-1, k)}(\cB)
\end{align*}
\end{lemma}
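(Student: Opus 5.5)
We prove the identity by applying Lemma~\ref{lem:reform} to $\cF$ and sorting the pairs $\{F,F'\}$ with $|F\cap F'|=k-1$ according to how many of $F,F'$ contain the vertex $1$. The map $F\mapsto F\setminus\{1\}$ is a bijection between $\{F\in\cF: 1\in F\}$ and $\cA$, and $\cF$ is the disjoint union of this set with $\cB$. Hence $|\cF|=|\cA|+|\cB|$, so the term $k|\cF|$ becomes $k|\cA|+k|\cB|$.

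For the edge count, consider an edge $\{F,F'\}$ of $J(n,k)[\cF]$.
\begin{itemize}
\item If both sets contain $1$, write $F=\{1\}\cup A$ and $F'=\{1\}\cup A'$. Then $|F\cap F'|=k-1$ holds exactly when $|A\cap A'|=k-2$, i.e.\ when $AA'$ is an edge of $J(n-1,k-1)$ on the ground set $[2,n]$. These edges contribute $e_{J(n-1,k-1)}(\cA)$.
\item If neither set contains $1$, both lie in $\cB\subseteq\binom{[2,n]}{k}$. These edges contribute $e_{J(n-1,k)}(\cB)$.
\item If $F=\{1\}\cup A$ with $A\in\cA$ and $F'=B\in\cB$, then $F\cap B=A\cap B$ because $1\notin B$. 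So $|F\cap B|=k-1$ if and only if $|A\cap B|=k-1=|A|$, i.e.\ $A\subset B$. These edges are in bijection with the pairs in $\cI(\cA,\cB)$.
\end{itemize}
Substituting the three counts into Lemma~\ref{lem:reform} gives the equality. This part uses nothing about intersection.

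For the inequality it suffices to show $|\cI(\cA,\cB)|\le k|\cB|$. Each $B\in\cB$ has exactly $k$ subsets of size $k-1$, so at most $k$ sets $A\in\cA$ satisfy $A\subset B$. Summing over $B$ gives $|\cI(\cA,\cB)|\le k|\cB|$, and hence $2|\cI(\cA,\cB)|\le 2k|\cB|$. Combined with the existing term $k|\cB|$ this yields $3k|\cB|$.

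Neither step presents a real obstacle. The only point needing care is the mixed case, where one must note that $1\notin B$ forces $F\cap B=A\cap B$. The intersecting hypothesis is not actually needed for this lemma.
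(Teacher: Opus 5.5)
Your proposal is correct and follows the paper's proof essentially verbatim: you use the same three-way split of the edges of $J(n,k)[\cF]$ by how many endpoints contain $1$, substituted into Lemma~\ref{lem:reform}, and the same bound $|\cI(\cA,\cB)|\le k|\cB|$ from each $B$ having only $k$ subsets of size $k-1$. Your remark that the intersecting hypothesis is never used is also accurate.
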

\begin{proof}[Proof of Lemma~\ref{lem:divdecomp}]
The edges in $J(n, k)[\cF]$ are of three types: (1) edges where $1$ is contained in both endpoints; (2) edges where both endpoints are contained in $\cB$; (3) crossing edges, with one endpoint containing $1$. The first type is in one-to-one correspondence with the edges in $J(n - 1, k-1)[\cA]$, and the second type is in one-to-one correspondence with $J(n-1, k)[\cB]$. The third type is in one-to-one correspondence with the pairs in $\cI(\cA, \cB)$. The stated equality then follows from Lemma~\ref{lem:reform}. 

The second statement follows from the observation that $|\cI(\cA, \cB)| \le k |\cB|$, as each $B\in \cB$ has only $k$ subsets of size $k-1$. 
\end{proof}

We compute the codegree squared sum of the Hilton-Milner family $\cH(n, k)$ using Lemma~\ref{lem:divdecomp}. 

    \begin{lemma}[Codegree squared sum of $\cH(n, k)$]\label{lem:hmcdsum}
For $n\ge 2k$,
\[\co(\cH(n, k)) = (k+(k-1)(n-k))\binom{n-1}{k-1} - (k + (k-1)(n-k)+k(k-1))\binom{n-k-1}{k-1} + 3k. \]
\end{lemma}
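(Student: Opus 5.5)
The plan is to apply the equality case of Lemma~\ref{lem:divdecomp} directly to $\cF=\cH(n,k)$, with the decomposition relative to vertex $1$. For $k\ge 3$ and $n>2k$, vertex $1$ is the unique maximum-degree vertex; in any case the equality in Lemma~\ref{lem:divdecomp} holds for the decomposition at any vertex, so we simply use vertex $1$. Write $B_0=[2,k+1]$. Then
\[\cA=\binom{[2,n]}{k-1}\setminus \cC, \qquad \cC:=\binom{[k+2,n]}{k-1}, \qquad \cB=\{B_0\}.\]
This gives $|\cA|=\binom{n-1}{k-1}-\binom{n-k-1}{k-1}$, $|\cB|=1$ and $e_{J(n-1,k)}(\cB)=0$. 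Every $(k-1)$-subset of $B_0$ meets $B_0$ and so lies in $\cA$, hence $|\cI(\cA,\cB)|=k$. The equality in Lemma~\ref{lem:divdecomp} therefore reduces to
\[\co(\cH(n,k)) = k|\cA| + 2e_{J(n-1,k-1)}(\cA) + 3k.\]

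The main step is counting the edges of $J(n-1,k-1)$ inside $\cA$. I would do this by complementation:
\[e(\cA)=e\Bigl(\tbinom{[2,n]}{k-1}\Bigr)-e(\cC)-e(\cC,\cA).\]

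The three terms are computed as follows.
\begin{itemize}
\item $J(n-1,k-1)$ is $(k-1)(n-k)$-regular, so the full Johnson graph contributes $\tfrac12(k-1)(n-k)\binom{n-1}{k-1}$ edges.
\item $\cC$ induces a copy of $J(n-k-1,k-1)$, which is $(k-1)(n-2k)$-regular, so it contributes $\tfrac12(k-1)(n-2k)\binom{n-k-1}{k-1}$ edges.
\item For the crossing edges, a neighbour of $C\in\cC$ that lies in $\cA$ must meet $B_0$. It is therefore obtained by deleting one of the $k-1$ elements of $C$ and adding one of the $k$ elements of $B_0$. Hence $e(\cC,\cA)=k(k-1)\binom{n-k-1}{k-1}$.
\end{itemize}

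Substituting, we get
\[2e(\cA)=(k-1)(n-k)\tbinom{n-1}{k-1}-\bigl((k-1)(n-2k)+2k(k-1)\bigr)\tbinom{n-k-1}{k-1}.\]
Adding $k|\cA|+3k$ gives coefficient $k+(k-1)(n-k)$ for $\binom{n-1}{k-1}$. The coefficient of $-\binom{n-k-1}{k-1}$ is $k+(k-1)(n-2k)+2k(k-1)=k+(k-1)(n-k)+k(k-1)$, which is exactly the claimed formula.

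The only place needing care is the crossing-edge count: one must check that each such swap yields a distinct set lying in $\cA$, and that no neighbour of $C$ in $\cA$ arises in any other way. For $n\ge 2k$ all these binomial coefficients are standard (possibly $\binom{n-k-1}{k-1}=1$ when $n=2k$), so no degenerate cases arise.
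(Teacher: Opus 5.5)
Your proof is correct and follows the paper's argument essentially verbatim. Like the paper, you apply the equality in Lemma~\ref{lem:divdecomp} with $\cB=\{[2,k+1]\}$ and $|\cI(\cA,\cB)|=k$, and count $e_{J(n-1,k-1)}(\cA)$ by removing from the $\frac12(k-1)(n-k)\binom{n-1}{k-1}$ edges of $J(n-1,k-1)$ both the edges inside $\binom{[k+2,n]}{k-1}$ and the $k(k-1)\binom{n-k-1}{k-1}$ crossing edges, with the same final coefficient bookkeeping.
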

\begin{proof}
  For the Hilton-Milner family $\cH(n, k)$, we have that $\cA = \{F\in \binom{[2, n]}{k-1}: F\cap [2, k+1] \neq\emptyset\}$ and $\cB = \{[2, k+1]\}.$ We have $|\cA| = \binom{n-1}{k-1} - \binom{n-k-1}{k-1}$ and $|\cB| = 1$, so that $2e_{J(n-1, k)}(\cB) = 0$. Furthermore, $|\cI(\cA, \cB)| = k$, since for $2\le i\le k+1$, each set $[2, k+1]\setminus \{i\}$ is in $\cA$. We lastly need to compute $2e_{J(n-1, k-1)}(\cA)$. There are $\frac12(k-1)(n-k)\binom{n-1}{k-1}$ total edges in $J(n-1, k-1)$ and we need to remove the $\frac12(k-1)(n-2k)\binom{n-k-1}{k-1}$ edges in the subgraph of $J(n-1, k-1)$ induced by $\binom{[k+2, n]}{k-1}$ and the $k(k-1)\binom{n-k-1}{k-1}$ crossing edges with one endpoint in $\cA$ and one endpoint in $\binom{[k+2, n]}{k-1}$. This gives 
  \begin{align*}
      e_{J(n-1, k-1)}(\cA) &= \frac12(k-1)(n-k)\binom{n-1}{k-1} - \frac12(k-1)(n-2k)\binom{n-k-1}{k-1} - (k-1)k\binom{n-k-1}{k-1}\\
      &=\frac12(k-1)(n-k)\binom{n-1}{k-1} - \left(\frac12(k-1)(n-k) + \frac12k(k-1)\right)\binom{n-k-1}{k-1}.
      \end{align*}
Lemma~\ref{lem:divdecomp} implies the desired result.   
\end{proof}

We also need the following general result of Hilton and Milner~\cite[Theorem 3]{HM} extending their well-known theorem. This theorem was later reproved by Han and Kohayakawa~\cite{HK}, who also proved the case $k=3$ and determined the families that attain equality. 

\begin{theorem}\label{thm:hm2}
	Let $\cF\subset \binom{[n]}{k}$ be an intersecting family such that $\gamma(\cF) \geq s$ with $k \geq \min\{3, s+1\}$ and $n \geq 2k$. Then,
	\[
	|\cF| \leq \begin{dcases}
		\binom{n-1}{k-1} - \binom{n-k}{k-1} + n - k & \text{if }2 < k \leq s+2,\\
		\binom{n-1}{k-1} - \binom{n-k}{k-1} + \binom{n-k-s}{k-s-1} + s & \text{if }k\leq 2\text{ or }k\geq s+2.
	\end{dcases}
	\]
\end{theorem}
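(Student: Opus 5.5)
The plan is to reduce the statement to a problem about cross-intersecting families and solve that with the Kruskal--Katona theorem in its lexicographic form. Relabel so that $1$ is a vertex of maximum degree, and split $\cF$ into $\cA\subseteq\binom{[2,n]}{k-1}$ and $\cB\subseteq\binom{[2,n]}{k}$ as in the preliminaries. Then $|\cF|=|\cA|+|\cB|$, the families $\cA$ and $\cB$ are cross-intersecting, and $|\cB|=\gamma(\cF)\ge s$. It therefore suffices to bound $|\cA|+|\cB|$ over cross-intersecting pairs with $|\cB|=b\ge s$. We also have $b\le|\cA|$: since $1$ has maximum degree, $|\cB|=|\cF|-|\cF(1)|\le|\cF|-|\cF(i)|$ for every $i$. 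This is needed to control the upper end of the range of $b$.

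\textbf{Step 1: fix $b$ and optimise $\cA$.} For fixed $b=|\cB|$, I would show that $|\cA|$ is maximised, up to isomorphism, when $\cB$ is the initial segment of length $b$ of the lexicographic order on $\binom{[2,n]}{k}$, and $\cA$ consists of all $(k-1)$-sets meeting every member of $\cB$. The argument passes to complements in $[2,n]$. Cross-intersection says that no complement of a set in $\cA$ lies inside a set of $\cB$, up to the appropriate shadow relation. The Kruskal--Katona theorem, in the form of Hilton's lemma on cross-intersecting families, then states that the relevant shadow of $\cB$ is minimised by the lex initial segment. This gives an explicit upper bound $f(b)$ on $|\cA|+|\cB|$ that depends only on $b$.

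\textbf{Step 2: evaluate $f$ at $b=s$.} For $s\le n-k$, the first $s$ sets in lex order are $[2,k]\cup\{k+j\}$ for $1\le j\le s$. A $(k-1)$-set meets all of them if and only if it meets $[2,k]$, or it avoids $[2,k]$ and contains $\{k+1,\dots,k+s\}$. This gives
\[
\binom{n-1}{k-1}-\binom{n-k}{k-1}+\binom{n-k-s}{k-s-1}+s,
\]
which is the second bound. When $k\le s+2$, one compares this with $b=n-k$, that is, $\cB$ equal to the full star of $[2,k]$. Then $\cA$ is exactly the family of $(k-1)$-sets meeting $[2,k]$, giving $\binom{n-1}{k-1}-\binom{n-k}{k-1}+n-k$. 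The condition $k\le s+2$ versus $k\ge s+2$ decides which of the two values is larger, and both agree at $k=s+2$. I would also handle $k\le 2$ directly: there intersecting families are tiny, and the hypothesis $k\ge\min\{3,s+1\}$ excludes the bad cases.

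\textbf{Step 3 (main obstacle): the range of $b$.} I must show that $f(b)\le\max\{f(s),f(\text{critical value})\}$ over the whole range $s\le b\le|\cA|$. My first attempt would be to show that $f$ is piecewise ``convex'' along lex order. Concretely, when $\cB$ is extended by the next lex set, the loss in $|\cA|$ is at least $1$ as long as $\cB$ stays within the star of $[2,k]$. Beyond that, at each new ``level'' of the lex order, the loss in $|\cA|$ jumps to something like $\binom{n-k-1}{k-2}$, which is much larger than the gain of $1$ in $|\cB|$. This is where $n\ge 2k$ enters. If the level-by-level comparison becomes messy near $b\approx|\cA|$, I would instead use the symmetric constraint $|\cB|\le|\cA|$ together with the standard bound for cross-intersecting families $\cA\subseteq\binom{[2,n]}{k-1}$, $\cB\subseteq\binom{[2,n]}{k}$ in which both families are large (the Frankl--Tokushige type inequality). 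That bound shows that the sum is far below the claimed value in the balanced regime. Verifying these binomial inequalities carefully at the boundary cases $n=2k$ and $s$ close to $k$ is where I expect most of the work.
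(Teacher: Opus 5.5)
The paper does not prove this statement; it quotes it from Hilton and Milner (reproved by Han and Kohayakawa). Your outline is the standard route: split at a maximum-degree vertex, use Hilton's lemma to pass to lex segments, then compare the values of $f(b)$. Your Steps 1 and 2, including the evaluations at $b=s$ and $b=n-k$, are correct. The gap is in Step 3, which is where all the difficulty lies.

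\textbf{The cap $b\le|\cA|$ is false.} Your cap is equivalent to $\gamma(\cF)\le\Delta(\cF)$. It does not follow from what you wrote: your argument only shows that $\gamma$ is at most the number of members avoiding a fixed $i$. It is also false under the hypotheses of the theorem. The Fano plane has $k=3$ and $n=7\ge 2k$, is intersecting, and has $\Delta=3$ but $\gamma=4$.

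\textbf{The true constraints do not survive the lex reduction.} Some cap on $b$ is indispensable. Take $\cB=\{B\in\binom{[2,n]}{k}:2\in B\}$, which is the lex segment of length $\binom{n-2}{k-1}$. Take $\cA=\{A\in\binom{[2,n]}{k-1}:2\in A\}$; for $n\ge 2k$ this is the largest family cross-intersecting $\cB$. This pair satisfies every constraint you could honestly carry through the reduction:
\begin{itemize}
\item $\cB$ is intersecting.
\item $|\cB|\le(k-1)|\cA|$ holds whenever $n\le k^2-k+1$. This inequality is the true consequence of $\Delta\ge|\cF|/k$.
\end{itemize}
Yet $|\cA|+|\cB|=\binom{n-1}{k-1}$. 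For $n>2k$ this exceeds the Hilton--Milner bound, and hence the claimed bound whenever $1\le s\le\binom{n-2}{k-1}$.

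The only thing excluding this pair is that vertex $2$ would have larger degree than vertex $1$ in the corresponding family. That information is destroyed once you replace $(\cA,\cB)$ by lex families and optimise $f(b)$. So the maximality of $|\cF(1)|$ has to be used before or alongside the reduction, for instance through an a priori bound on $\gamma$ or a separate treatment of large $\gamma$.

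\textbf{The intermediate range is not handled either.} After each jump, $f$ increases again along the new block, just as it does for $k\le b\le n-k$. So you must compare all block endpoints
$h(u)=\binom{n-1}{k-1}-\binom{n-u-1}{k-1}+\binom{n-u-1}{k-u}$,
and also the sub-block endpoints. Near $n=2k$ there is no slack: for $k=4$, $n=9$ one has $h(2)=h(3)=51$, which equals the claimed bound. So ``the loss is much larger than the gain'' is not an argument. Moreover, $h$ decreases from $u=k-1$ down to $u=2$ but $h(1)=\binom{n-1}{k-1}$, which is precisely the pair above.

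Likewise, a Frankl--Tokushige-type inequality cannot show the sum is ``far below'' the target when both families are large. The pair above is exactly such a case, and it attains $\binom{n-1}{k-1}$.
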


\subsection{Proof of Theorem~\ref{thm:hml2}}
We first prove a stability result for the family $\cH(n, k)$ in the $\ell_2$-norm. 

\begin{theorem}\label{thm:completeHMstability}
	Suppose $\cF \subset \binom{[n]}{k}$ is a nontrivial intersecting family with $n > 2k \geq 8$ and $\cF$ is not contained in any family isomorphic to $\cH(n, k)$. Then
	\[ \co(\cH(n, k)) - \co(\cF) \ge \max\{f(n, k), g(n, k)\},\]
	where
    \[f(n, k) = (k-1)\left(n\binom{n-k-2}{k-2}-\binom{n-1}{k-1}\right)+(2k-3)\left(\binom{n-k-2}{k-2} - k\right) \]
    and 
    \[g(n, k) = \frac{n+3k^2-5k}{k-1}\binom{n-k-2}{k-2} -2(k-1)(n-k) -k - \frac{k}{\binom{n-1}{k-1}}\left[\frac{n-2}{k-1}\binom{n-k-2}{k-2}-2\right]^2 \]
\end{theorem}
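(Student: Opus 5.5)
Since $\cF$ is nontrivial and not contained in any copy of $\cH(n,k)$, we have $\gamma(\cF)\ge 2$. Label so that $1$ has maximum degree and split $\cF$ into $\cA,\cB$ as in the preliminaries, so $|\cB| = \gamma(\cF) \ge 2$. We then apply Theorem~\ref{thm:hm2} with $s=2$; this is allowed because $k\ge 4 \ge \min\{3,3\}$ and $k\ge s+2$. It gives
\[|\cF| \le \binom{n-1}{k-1}-\binom{n-k}{k-1}+\binom{n-k-2}{k-3}+2 .\]
Using $\binom{n-k}{k-1}=\binom{n-k-1}{k-1}+\binom{n-k-2}{k-2}+\binom{n-k-2}{k-3}$, this becomes $|\cF|\le |\cH(n,k)| - \binom{n-k-2}{k-2}+1$. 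So compared with the Hilton--Milner family we lose at least $\binom{n-k-2}{k-2}-1$ sets. The two bounds $f$ and $g$ come from two different ways of turning this size deficit into a deficit in $\co$.

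\textbf{Bound $g$ (via Bey).} Apply Theorem~\ref{thm:bey} to $\cF$ directly:
\[\co(\cF)\le \frac{k}{\binom{n-1}{k-1}}|\cF|^2+(k-1)(n-k)|\cF| .\]
The right side is increasing in $|\cF|$, so we may substitute the upper bound $m:=|\cH(n,k)|-\binom{n-k-2}{k-2}+1$. We subtract the result from the exact value of $\co(\cH(n,k))$ in Lemma~\ref{lem:hmcdsum}. To get the stated form, expand $|\cF|^2$ by writing $m=\binom{n-1}{k-1}-x$ with
\[x=\binom{n-k-1}{k-1}+\binom{n-k-2}{k-2}-2 .\]
Then
\[\frac{k}{\binom{n-1}{k-1}}m^2 = k\binom{n-1}{k-1}-2kx+\frac{k x^2}{\binom{n-1}{k-1}} .\]
Next use the identity $\binom{n-k-1}{k-1}=\frac{n-k-1}{k-1}\binom{n-k-2}{k-2}$, which gives $x=\frac{n-2}{k-1}\binom{n-k-2}{k-2}-2$, exactly the bracket in $g$. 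After this, the $\binom{n-1}{k-1}$ terms cancel against Lemma~\ref{lem:hmcdsum} and the remaining linear terms collect into
\[\frac{n+3k^2-5k}{k-1}\binom{n-k-2}{k-2}-2(k-1)(n-k)-k .\]
This step is routine bookkeeping.

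\textbf{Bound $f$ (via Lemma~\ref{lem:divdecomp}).} Use the inequality in Lemma~\ref{lem:divdecomp}. Bound $2e_{J(n-1,k-1)}(\cA)$ by counting, for each set of $\cA$, its at most $(k-1)(n-k)$ Johnson neighbours; equivalently, each vertex of $J(n-1,k-1)$ has degree $(k-1)(n-k)$. Bound $2e_{J(n-1,k)}(\cB)\le k(n-k-1)|\cB|$ in the same way, and keep the term $3k|\cB|$. This gives a bound of the form $\alpha|\cA|+\beta|\cB|$ with $\alpha=k+(k-1)(n-k)$ and $\beta\approx 3k+k(n-k-1)$. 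The trouble is that $\beta>\alpha$, so large $|\cB|$ is a priori costly. Two controls are needed:
\begin{itemize}
\item $|\cA|+|\cB|$ is bounded by the display above;
\item $|\cB|$ is at most the diversity bound $|\cB|\le\binom{n-3}{k-2}$ (Frankl), or more simply $\Delta\ge |\cF|k/n$.
\end{itemize}
Then compare with Lemma~\ref{lem:hmcdsum}. The term $n\binom{n-k-2}{k-2}$ in $f$ suggests that the argument charges each of the $\approx\binom{n-k-2}{k-2}$ missing sets a loss of about $(k-1)n$. A more refined version subtracts the non-edges of $\cA$ forced by cross-intersection with two distinct sets $B_1,B_2\in\cB$. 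Specifically, the $(k-1)$-sets meeting $B_1$ but avoiding $B_2$ (at least $\binom{n-k-2}{k-2}$ of them) are missing from $\cA$, and each kills up to $(k-1)$ adjacencies. This produces the $(k-1)n\binom{n-k-2}{k-2}$ versus $(k-1)\binom{n-1}{k-1}$ structure and the $(2k-3)(\binom{n-k-2}{k-2}-k)$ correction.

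The main obstacle is the $f$ bound. We must handle the crossing/incidence term and $e(\cB)$ carefully, so that a large $\cB$ does not outweigh the loss in $\cA$. We must also identify exactly which missing $(k-1)$-sets, and which missing Johnson edges, to count so that the constants match. I would first try a direct comparison of the edge sets of $J(n-1,k-1)[\cA]$ and $J(n-1,k-1)[\cA_{HM}]$ for the two sets $B_1,B_2$. If that fails, I would fall back on the crude degree bound combined with Theorem~\ref{thm:hm2}.
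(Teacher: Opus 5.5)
\textbf{Verdict: there is a genuine gap.} Your derivation of $g$ is correct and is exactly the paper's argument: $\gamma(\cF)\ge 2$, Theorem~\ref{thm:hm2} with $s=2$, then Bey's inequality (increasing in $|\cF|$) subtracted from Lemma~\ref{lem:hmcdsum}, with $x=\frac{n-2}{k-1}\binom{n-k-2}{k-2}-2$; the bookkeeping checks out. The statement also asserts the bound $f$, and that half is not proved. Neither of your routes can deliver it, and it matters: $g$ is negative for large $n$, so $f$ is the bound doing the work there.

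\emph{The crude fallback provably fails.} From $2e_{J(n-1,k-1)}(\cA)\le (k-1)(n-k)|\cA|$, $2e_{J(n-1,k)}(\cB)\le k(n-k-1)|\cB|$ and $|\cI(\cA,\cB)|\le k|\cB|$ you get $\co(\cF)\le (k+(k-1)(n-k))|\cF|+n|\cB|$. Now let $\cA_0=\{A\in\binom{[2,n]}{k-1}: A\cap[2,k+1]\neq\emptyset\}$ be the link of $1$ in $\cH(n,k)$. It has edge boundary $k(k-1)\binom{n-k-1}{k-1}=k(n-k-1)\binom{n-k-2}{k-2}$ in $J(n-1,k-1)$, so $\co(\cH(n,k))=(k+(k-1)(n-k))|\cA_0|-k(n-k-1)\binom{n-k-2}{k-2}+3k$. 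Combining this with $|\cF|\le |\cA_0|-\binom{n-k-2}{k-2}+2$ yields only
\[\co(\cH(n,k))-\co(\cF)\ge (3k-n)\binom{n-k-2}{k-2}-2(k-1)(n-k)+k-n|\cB|.\]
This is negative for every $n\ge 3k$, whatever $|\cB|$ is, whereas $f$ is positive of order $n^{k-1}$. The reason is that the size deficit buys you about $(k-1)n\binom{n-k-2}{k-2}$. But the per-vertex Johnson degree bound already overestimates $\co(\cH(n,k))$ itself by about $kn\binom{n-k-2}{k-2}$.

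\emph{The refined idea points the wrong way.} Saying each missing $(k-1)$-set ``kills up to $k-1$ adjacencies'' is an upper bound on lost edges. What you would need is a lower bound on the edge boundary of an arbitrary $\cA$ cross-intersecting $\cB$, which is a Johnson-graph isoperimetric statement you do not have.

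\textbf{The missing idea.} Stop tracking the structure of $\cA$ and control only the few $(k-1)$-sets of large codegree. Assume $\cF$ is maximal.
\begin{itemize}
\item If $J\in\binom{[n]}{k-1}$ misses some $F_0\in\cF$, every member of $\cF$ through $J$ is $J\cup\{x\}$ with $x\in F_0$, so $d(J)\le k$. Hence $d(J)\ge k+1$ forces $J\in\cC_{k-1}$, the family of $(k-1)$-covers.
\item By maximality and $n>2k$, $\cC_{k-1}$ is intersecting. If it is nontrivial, Hilton--Milner bounds it. If it has a common element $i$, then, since $\gamma(\cF)\ge 2$, two sets $F,F'\in\cF$ avoid $i$, and every $J-i$ must meet both. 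In either case $|\cC_{k-1}|\le\binom{n-1}{k-2}-2\binom{n-k-1}{k-2}+\binom{n-k-2}{k-2}$.
\end{itemize}
Then
\[\co(\cF)=k\sum_J d(J)+\sum_J d(J)\bigl(d(J)-k\bigr)\le k^2|\cF|+|\cC_{k-1}|(n-k+1)(n-2k+1).\]
Inserting the Theorem~\ref{thm:hm2} bound on $|\cF|$ and Lemma~\ref{lem:hmcdsum} gives exactly $f$.
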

\begin{proof}
	We may assume that $\cF$ is an inclusion-maximal intersecting family.  Define
	
	\[
	\mathcal{C}_{k-1} = \left\{ J \in \binom{[n]}{k-1} : J\text{ is a cover of }\cF \right\}.
	\]
	
	We first prove the family $\mathcal{C}_{k-1}$ is intersecting. If $|\mathcal{C}_{k-1}| \leq 1$ then $\mathcal{C}_{k-1}$ is trivially intersecting. Suppose $\mathcal{C}_{k-1}$ is not intersecting and $|\mathcal{C}_{k-1}| > 1$. Let $I,J \in \mathcal{C}_{k-1}$ such that $I \cap J = \emptyset$. Since $n > 2k$ we may choose distinct $s, t \in [n]\setminus (I\cup J)$. The sets $I\cup\{s\}$ and $J\cup\{t\}$ intersect every $F \in \cF$ so the maximality of $\cF$ implies $I\cup\{s\}, J\cup\{t\} \in \cF$. Hence, $I\cap J \neq \emptyset$, a contradiction.
	
	If $\mathcal{C}_{k-1}$ is nontrivial, then by Theorem~\ref{thm:hm},
	\begin{equation}\label{eq:nontrivial}
	|\mathcal{C}_{k-1}| \leq \binom{n-1}{k-2} - \binom{n-k}{k-2} + 1.
	\end{equation}
	
	On the other hand, suppose that $\mathcal{C}_{k-1}$ has an element $i$ which is contained in all sets in $\mathcal{C}_{k-1}$. As $\cF$ is nontrivial and $\gamma(\cF) \geq 2$, there exist distinct $F, F' \in \cF$ such that $i \not \in F \cup F'$. Every set in $\mathcal{C}_{k-1}$ is a cover of $\cF$ which implies $(J-i)\cap F$ and $(J-i)\cap F'$ are nonempty for all $J \in \mathcal{C}_{k-1}$. We count the number of possible sets in $\binom{[n]\setminus\{i\}}{k-2}$ that intersect both $F$ and $F'$, giving an upper bound on $|\mathcal{C}_{k-1}|$,
	\begin{equation}\label{eq:trivial}
	|\mathcal{C}_{k-1}| \leq \binom{n-1}{k-2} - 2\binom{n - k - 1}{k-2} + \binom{n-2k+|F\cap F'|-1}{k-2}.
	\end{equation}
	Since $1\leq |F\cap F'| \leq k-1$, the upper bound in \eqref{eq:trivial} is increasing in $|F\cap F'|$ and the maximum occurs when $|F\cap F'| = k-1$. The upper bound in \eqref{eq:trivial} is at least the upper bound in \eqref{eq:nontrivial} when $|F\cap F'| = k-1$. Hence
	\begin{equation*}
		|\mathcal{C}_{k-1}| \leq \binom{n-1}{k-2} - 2\binom{n - k - 1}{k-2} + \binom{n-k-2}{k-2}.
	\end{equation*}
	
	Now, every $J \in \binom{[n]}{k-1}$ with $d(J) \geq k+1$ is in $\mathcal{C}_{k-1}$. Indeed, if $J$ is not a cover then $J \cap F' = \emptyset$ for some $F' \in \cF$ and every $F \in \cF$ containing $J$ is of the form $J \cup \{i\}$ where $i \in F'$ as $\cF$ is intersecting and therefore $d(J) \leq k$. Hence, all $(k-1)$-sets with codegree at least $k+1$ are covers and so
	\begin{equation}\label{eq:dTbound}
		\left|\left\{J \in \binom{[n]}{k-1}: d(J) \geq k+1\right\}\right| \leq |\mathcal{C}_{k-1}| \leq \binom{n-1}{k-2} - 2\binom{n - k - 1}{k-2} + \binom{n-k-2}{k-2}.
	\end{equation}
	
	Since $\gamma(\cF) \geq 2$, by Theorem~\ref{thm:hm2}, we have
	\begin{equation}\label{eq:genHM}
		|\cF| \leq \binom{n-1}{k-1} - \binom{n-k}{k-1} + \binom{n-k-2}{k-3} + 2 = \binom{n-1}{k-1} - \binom{n-k-1}{k-1} - \binom{n-k-2}{k-2} + 2.
	\end{equation}
	
	Since $d(J) \leq n - k+1$, it follows that
	\begin{align*}
		\co(\cF) &= \sum_{J \in \binom{[n]}{k-1}} d(J)^2\\
		&= k\sum_{J \in \binom{[n]}{k-1}}d(J) + \sum_{J \in \binom{[n]}{k-1}}d(J)(d(J)-k)\\
		&\leq k^2|\cF| + \sum_{\substack{J \in \binom{[n]}{k-1}\\ d(J) \geq k+1}}d(J)(d(J)-k)\\
		&\leq k^2 |\cF| + |\mathcal{C}_{k-1}|(n-k+1)(n-2k+1)
	\end{align*}
	
	Using \eqref{eq:dTbound} and \eqref{eq:genHM} we have
	\begin{align*}
		\co(\cH(n,k)) - \co(\cF)&\geq \co(\cH(n,k)) - k^2 \left[\binom{n-1}{k-1} - \binom{n-k-1}{k-1} - \binom{n-k-2}{k-2}+2\right]\\
		&\quad\quad - (n-k+1)(n-2k+1) \left[\binom{n-1}{k-2}-2\binom{n-k-1}{k-2}+\binom{n-k-2}{k-2}\right]\\
		&= ((k-1)n + 2k-3)\binom{n-k-2}{k-2} - (k-1)\binom{n-1}{k-1}-k(2k-3)\\
        &= (k-1)\left(n\binom{n-k-2}{k-2}-\binom{n-1}{k-1}\right)+(2k-3)\left(\binom{n-k-2}{k-2} - k\right).
	\end{align*}
    \newpage
    We provide the other lower bound using Bey's inequality,
    \begin{align*}
        \co(\cH(n,k)) - \co(\cF)&\geq \co(\cH(n,k)) - \frac{k}{\binom{n-1}{k-1}}|\cF|^2 -(k-1)(n-k)|\cF|\\
        &\geq (k+(k-1)(n-k))\binom{n-1}{k-1} - (k + (k-1)(n-k)+k(k-1))\binom{n-k-1}{k-1} + 3k\\
        &\quad\quad -\frac{k}{\binom{n-1}{k-1}} \left[\binom{n-1}{k-1} - \binom{n-k-1}{k-1} - \binom{n-k-2}{k-2} + 2\right]^2 \\
        &\quad\quad - (k-1)(n-k) \left[\binom{n-1}{k-1} - \binom{n-k-1}{k-1} - \binom{n-k-2}{k-2} + 2\right] \\
        &= k\binom{n-1}{k-1} - k^2 \binom{n-k-1}{k-1} + 3k+ (k-1)(n-k)\binom{n-k-2}{k-2} - 2(k-1)(n-k)\\
        &\quad\quad - k\binom{n-1}{k-1} + 2k \left[\binom{n-k-1}{k-1}+\binom{n-k-2}{k-2} -2\right]\\
        &\quad\quad -\frac{k}{\binom{n-1}{k-1}}\left[\binom{n-k-1}{k-1}+\binom{n-k-2}{k-2}-2\right]^2\\
        &=  (2k - k^2) \binom{n-k-1}{k-1} + ((k-1)(n-k)+2k)\binom{n-k-2}{k-2} - 2(k-1)(n-k)-k\\
        &\quad\quad -\frac{k}{\binom{n-1}{k-1}}\left[\binom{n-k-1}{k-1}+\binom{n-k-2}{k-2}-2\right]^2\\
        &= \frac{n+3k^2-5k}{k-1}\binom{n-k-2}{k-2} -2(k-1)(n-k) -k - \frac{k}{\binom{n-1}{k-1}}\left[\frac{n-2}{k-1}\binom{n-k-2}{k-2}-2\right]^2.
    \end{align*}

    \end{proof}
    Using Theorem~\ref{thm:completeHMstability}, we can prove the Hilton-Milner theorem in the $\ell_2$-norm. 
    
    \begin{proof}[Proof of Theorem~\ref{thm:hml2}]

    If $\cF \subset \binom{[n]}{k}$ is a proper subfamily of a family isomorphic to $\cH(n, k)$, then strict monotonicity of the $(j, p)$-norm under edge addition gives $\co(\cF) < \co(\cH(n, k))$. Suppose $\cF$ is a nontrivial intersecting family which is not a subfamily of any copy of $\cH(n, k)$. As the cases $n = 2k$ and $k=3$ of Theorem~\ref{thm:hml2} were resolved in \cite{BL}, we assume $n > 2k \ge 8$. By Theorem~\ref{thm:completeHMstability},

    \[\co(\cH(n, k)) - \co(\cF) \ge \max\{f(n, k), g(n, k)\},\]
    where
   \[
    f(n,k) = (k-1)\left(n\binom{n-k-2}{k-2}-\binom{n-1}{k-1}\right)+(2k-3)\left(\binom{n-k-2}{k-2} - k\right)
    \]
    and
    \[
    g(n,k) = \frac{n+3k^2-5k}{k-1}\binom{n-k-2}{k-2} -2(k-1)(n-k) -k - \frac{k}{\binom{n-1}{k-1}}\left[\frac{n-2}{k-1}\binom{n-k-2}{k-2}-2\right]^2
    \]

    We will show that $f(n,k) > 0$ or $g(n,k) > 0$ whenever $8\le 2k < n$. We first dispense with the case that $n = 2k+1$. Consider
    \begin{align*}
        g(2k+1, k) &= \frac{2k+1+3k^2-5k}{k-1}\binom{k-1}{k-2} -2(k-1)(k+1) -k - \frac{k}{\binom{2k}{k-1}}\left[\frac{2k-1}{k-1}\binom{k-1}{k-2}-2\right]^2\\
        &= (k-1)(k-3) -\frac{k(2k-3)^2}{\binom{2k}{k-1}}.
    \end{align*}

    Since $k \geq 4$ we have $\binom{2k}{k-1} \geq \binom{2k}{3}$. Hence
    \[
        g(2k+1,k) \geq (k-1)(k-3) - \frac{3(2k-3)^2}{(2k-1)(2k-2)}>0.
    \]

    Therefore, we may assume $n\geq 2k+2$. It follows that $\binom{n-k-2}{k-2} \geq \binom{k}{k-2}$. Suppose that $f(n,k) \leq 0$. We will show that this implies $g(n,k) > 0$. We observe
    \begin{align*}
    (k-1)\left(n\binom{n-k-2}{k-2}-\binom{n-1}{k-1}\right) &\leq (2k-3)\left(k- \binom{n-k-2}{k-2}\right)\\
    &\leq (2k-3) \left(k - \binom{k}{k-2}\right) \\
    &= \frac{3k}{2}(1-k)(2k-3).
    \end{align*}

    Since $k \geq 4$, it follows that
    \begin{equation}\label{eq:binomrel}
    n\binom{n-k-2}{k-2}<\binom{n-1}{k-1}.
    \end{equation}

    The preceding inequality implies $n < k^2$. Suppose to the contrary that $n \geq k^2$. Consider
    \begin{align*}
        \frac{\binom{n-1}{k-1}}{\binom{n-k-2}{k-2}} &= \frac{(n-1)!}{(k-1)!(n-k)!}\cdot\frac{(k-2)!(n-2k)!}{(n-k-2)!}\\
        &= \frac{1}{k-1}\left(\frac{\prod_{i=1}^{k-1}(n-k+i)}{\prod_{j=1}^{k-2}(n-2k+j)}\right)\\
        &= \frac{n-1}{k-1}\left(\prod_{i=1}^{k-2} \frac{n-k+i}{n-2k+i}\right)\\
        &= \frac{n-1}{k-1}\left(\prod_{i=1}^{k-2} \left(1+\frac{k}{n-2k+i}\right)\right)\\
        &\leq \frac{n-1}{k-1}\left(1 + \frac{k}{n-2k+1}\right)^{k-2}\\
        &\leq \frac{n-1}{k-1}\left(1 + \frac{k}{(k-1)^2}\right)^{k-2}.
    \end{align*}

    Since $\frac{k}{(k-1)^2} < \frac{1}{k-2}$ and $k\geq 4$ we have
    \[
    \frac{\binom{n-1}{k-1}}{\binom{n-k-2}{k-2}} \leq \frac{n-1}{k-1}\left(1 + \frac{1}{k-2}\right)^{k-2} < \frac{e(n-1)}{k-1} < n.
    \]

    This contradicts \eqref{eq:binomrel}. Therefore we may assume $n < k^2$.

    We now show that $g(n, k ) > 0$ by a straightforward calculation utilizing \eqref{eq:binomrel} and the assumptions that $8 \leq 2k < n < k^2$.
    \newpage
    \begin{align*}
        g(n,k) &= \frac{n+3k^2-5k}{k-1}\binom{n-k-2}{k-2} -2(k-1)(n-k) -k - \frac{k}{\binom{n-1}{k-1}}\left[\frac{n-2}{k-1}\binom{n-k-2}{k-2}-2\right]^2\\
        &\geq \frac{n+3k^2-5k}{k-1}\binom{n-k-2}{k-2} -2(k-1)(n-k) -k - \frac{k}{\binom{n-1}{k-1}}\left[\frac{n-2}{k-1}\binom{n-k-2}{k-2}\right]^2\\
        &\geq \frac{n+3k^2-5k}{k-1}\binom{n-k-2}{k-2} -2(k-1)(n-k) -k - \frac{k}{n}\left(\frac{n-2}{k-1}\right)^2 \binom{n-k-2}{k-2}\\
        &= \binom{n-k-2}{k-2}\left(\frac{3k^3-8k^2+9k-n-\frac{4k}{n}}{(k-1)^2}\right)-2(k-1)(n-k) -k\\
        &> \binom{n-k-2}{k-2}\left(3k-4\right)-2(k-1)(n-k) -k\\
        &\geq \binom{n-k-2}{2}\left(3k-4\right)-2(k-1)(n-k) -k\\
        &> 0
    \end{align*}

    This completes the proof. 
\end{proof}

\section{Asymptotic Hilton-Milner theorem in \texorpdfstring{$(j, p)$-norm}{}}

In this section, we prove the following theorem, which implies Hilton-Milner for large $n$ in the $(j, p)$-norm.

\begin{theorem}\label{thm:hmjpnormlarge}
Let $k\ge 4$ be an integer and let $p > 1$ be a real number. Suppose $\cF \subset \binom{[n]}{k}$ is a nontrivial intersecting family which is not contained in any family isomorphic to $\cH(n, k)$. Then, as $n\rightarrow \infty$, uniformly over $\cF$ we have
\[\cojp(\cF)\le \begin{dcases} \left(\dfrac{k-1}{(j-2)!(k-j)!^p} + o(1)\right)n^{j-2 + p(k-j)} & \text{if } 2\le j\le k-1 \\ (\max\{(k-1)^p + k-1, 3\cdot 2^p\} + o(1))\binom{n-2}{k-2}^p & \text{if } j=1.\end{dcases}\]
\end{theorem}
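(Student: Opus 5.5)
The plan is to split the codegree sum according to whether the $j$-set $J$ is a cover of $\cF$, and to reduce the main term to counting the $2$-covers of $\cF$. Throughout we may assume $\cF$ is inclusion-maximal. We must keep the hypothesis that $\cF$ is not contained in a copy of $\cH(n,k)$, so that $\gamma(\cF)\ge 2$. By Theorem~\ref{thm:hm2}, $|\cF|=O(n^{k-2})$. The constants in $O(\cdot)$ depend only on $k$ and $j$.

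\emph{Step 1: non-covers contribute $o$ of the main term.} Suppose $J\in\binom{[n]}{j}$ is not a cover, so $J\cap G=\emptyset$ for some $G\in\cF$. Every set containing $J$ meets $G$, hence $d(J)\le k\binom{n-j-1}{k-j-1}=O(n^{k-j-1})$. Therefore
\[\sum_{J\text{ not a cover}} d(J)^p\le \max d(J)^{p-1}\sum_J d(J)= O\!\left(n^{(p-1)(k-j-1)}\right)\binom{k}{j}|\cF|=O\!\left(n^{p(k-j)+j-p-1}\right).\]
Since $p>1$, this is $o(n^{j-2+p(k-j)})$. For $j=1$ the same estimate is $o(\binom{n-2}{k-2}^p)$.

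\emph{Step 2: structure of the $2$-covers.} The pairs that cover $\cF$ form a pairwise intersecting graph. If two covers $I,J$ were disjoint, we could add $I\cup S$ and $J\cup S'$ with $S,S'$ disjoint, exactly as in the proof of Theorem~\ref{thm:completeHMstability}; maximality would put both sets in $\cF$, contradicting that $\cF$ is intersecting. An intersecting graph is therefore a triangle or a star. If it is a star with centre $1$ and leaf set $L$, then every $F\in\cF$ with $1\notin F$ contains $L$. Such an $F$ exists because $\cF$ is nontrivial, so $|L|\le k$. If $|L|=k$, then $L$ is the only set avoiding $1$, giving $\gamma(\cF)=1$ and $\cF\subseteq\cH(n,k)$, which is excluded. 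Hence the number $t$ of $2$-covers satisfies $t\le\max\{3,k-1\}=k-1$, using $k\ge4$.

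\emph{Step 3: counting $j$-covers for $j\ge2$.} Every cover contains a minimal cover. Each minimal cover has size at most $k$, since a member of $\cF$ is itself a cover. Minimal covers of size $s\ge3$ number at most $k^s$, by the usual branching argument: pick an element of some set, then of a set it misses, and so on. So the $j$-covers that contain no $2$-cover number $O(n^{j-3})$. The $j$-covers containing a $2$-cover number at most $t\binom{n-2}{j-2}\le (k-1+o(1))\,n^{j-2}/(j-2)!$. Every $J$ satisfies $d(J)\le\binom{n-j}{k-j}=(1+o(1))\,n^{k-j}/(k-j)!$. Multiplying these bounds and adding Step 1 gives the case $2\le j\le k-1$.

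\emph{Step 4: the case $j=1$.} Fix a vertex $i$ and pick $G\in\cF$ with $i\notin G$. Every $F\ni i$ contains some $x\in G$. If $\{i,x\}$ is not a cover, then $F$ also meets a set $H\in\cF$ avoiding $\{i,x\}$, which leaves $O(n^{k-3})$ choices for $F$. Hence
\[d(i)\le t_i\binom{n-2}{k-2}+O(n^{k-3}),\]
where $t_i$ is the number of $2$-covers containing $i$. In the star case with $\ell=|L|\le k-1$, the centre has degree at most $\ell\binom{n-2}{k-2}$ and each of the $\ell$ leaves has degree at most $\binom{n-2}{k-2}$, up to lower-order terms. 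Every other vertex has degree $O(n^{k-3})$, and the sum of the $p$-th powers of these degrees is $o(\binom{n-2}{k-2}^p)$ by the argument of Step 1. The total is at most $(\ell^p+\ell+o(1))\binom{n-2}{k-2}^p$, and $\ell^p+\ell\le (k-1)^p+k-1$. In the triangle case, each of the three vertices has degree at most $(2+o(1))\binom{n-2}{k-2}$, giving $3\cdot 2^p$. If there are no $2$-covers, everything is $o(\binom{n-2}{k-2}^p)$.

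\emph{Main obstacle.} The delicate point is Step 2 together with the degree bound in Step 4. We must check that the maximality argument really forces the $2$-covers to form a star with at most $k-1$ leaves or a triangle. We must also make all error terms uniform in $\cF$, which holds because every implicit constant depends only on $k$, $j$ and $p$.
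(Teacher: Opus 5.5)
Your proof is correct. For $j=1$ it is essentially the paper's argument. The graph $\cC_2$ of $2$-covers is a triangle or a star with at most $k-1$ leaves. Then $d_{\cF}(v)\le d_{\cC_2}(v)\binom{n-2}{k-2}+O(n^{k-3})$, obtained by summing pair-codegrees over a member of $\cF$ avoiding $v$, exactly as in Lemma~\ref{lem:vtxdeg}.

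For $2\le j\le k-1$, however, you bound $|\cC_j|$ by a different route. The paper proves that $\cC_j$ is itself intersecting (Lemma~\ref{lem:cjint}). It then either applies the Hilton--Milner theorem to the $j$-uniform family $\cC_j$, or, if all $j$-covers share a point $i$, counts $(j-1)$-sets meeting two members of $\cF$ that avoid $i$ (Lemma~\ref{lem:nonhmbd}). You instead go through minimal covers. Supersets of the at most $k-1$ two-covers contribute at most $(k-1)\binom{n-2}{j-2}$. Every other $j$-cover contains a minimal cover of size $s\in[3,j]$, of which there are at most $k^s$, so there are only $O(n^{j-3})$ such $j$-covers.

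Your route lets one structural fact, the shape of $\cC_2$, drive every value of $j$. It also shows that, up to $O(n^{j-3})$, the $j$-covers are just supersets of $2$-covers. The price is importing the $k^s$ bound on minimal covers. The paper's route avoids that bound and mirrors the proof of Theorem~\ref{thm:completeHMstability}. There, cover bounds free of the $k^s$ loss are exactly what is needed to handle $n$ not much larger than $k$.

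There are two harmless slips.

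\emph{The last sentence of Step~1 is false.} For $j=1$ a non-cover vertex can have degree of order $\binom{n-2}{k-2}$; indeed every vertex is a non-cover, since $\cF$ is nontrivial. So that estimate only gives $O(\binom{n-2}{k-2}^p)$. What Step~4 actually uses is the same argument applied to vertices of degree $O(n^{k-3})$, which gives $O(n^{(p-1)(k-3)+k-2})=o(\binom{n-2}{k-2}^p)$.

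\emph{The remark about minimal covers is unjustified but unneeded.} ``A member of $\cF$ is a cover'' bounds only the covering number, not the size of every minimal cover. You do not need that claim: a $j$-cover contains a minimal cover of size at most $j$, and of size at least $2$ because nontriviality rules out $1$-covers.
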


Let us first show why Theorem~\ref{thm:hmjpnormlarge} implies Theorem~\ref{thm:hmjp}. We first record the $(j, p)$-norm of the Hilton-Milner family $\cH(n, k)$. 

\begin{lemma}\label{lem:mjpcalc}
Let $k$ and $j$ be fixed positive integers, let $p > 1$ be a real number and let $n\rightarrow \infty$. We have
\[\cojp(\cH(n, k)) = \begin{dcases}\left(\dfrac{k}{(j-2)!(k-j)!^p} + o(1)\right)n^{j-2+p(k-j)}& \text{if } 2\le j\le k-1 \\ (k^p + k +o(1))\binom{n-2}{k-2}^p & \text{if } j=1.\end{dcases}\]
\end{lemma}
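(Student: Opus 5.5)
We compute $\cojp(\cH(n,k))$ directly, splitting the $j$-sets $J\subseteq[n]$ into classes according to whether $1\in J$ and whether $J$ meets $B:=[2,k+1]$. For each class we estimate the number of sets in it and the codegree $d(J)$, which is the number of members of $\cH(n,k)$ containing $J$. We then keep only the classes contributing at the top order $n^{j-2+p(k-j)}$, respectively $\binom{n-2}{k-2}^p$, and show all other classes are lower order because $p>1$. Recall that a member of $\cH(n,k)$ is either $B$ itself or a set $\{1\}\cup A$ with $A\cap B\neq\emptyset$. Throughout, $k$, $j$ and $p$ are fixed and $n\to\infty$.

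\emph{Case $j=1$.} Three degree computations are needed.
\begin{itemize}
\item Vertex $1$ has degree $\binom{n-1}{k-1}-\binom{n-k-1}{k-1}$. Expanding the difference over the first element of $B$ hit gives $(k+o(1))\binom{n-2}{k-2}$, so vertex $1$ contributes $(k^p+o(1))\binom{n-2}{k-2}^p$.
\item Each $b\in B$ lies in $B$ and in every set $\{1,b\}\cup C$ with $|C|=k-2$, so its degree is $\binom{n-2}{k-2}+1$. The $k$ such vertices contribute $(k+o(1))\binom{n-2}{k-2}^p$.
\item A vertex $v\notin B\cup\{1\}$ has degree equal to the number of $(k-2)$-sets $C\subseteq[2,n]\setminus\{v\}$ meeting $B$. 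This is $O(n^{k-3})$. Summed over $n$ vertices the contribution is $O(n^{1+p(k-3)})=o(n^{p(k-2)})$, since $p>1$.
\end{itemize}
Adding these gives $(k^p+k+o(1))\binom{n-2}{k-2}^p$.

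\emph{Case $2\le j\le k-1$.} There are four classes.
\begin{itemize}
\item If $1\in J$ and $J\setminus\{1\}$ meets $B$, then every $k$-set containing $J$ lies in $\cH(n,k)$, so $d(J)=\binom{n-j}{k-j}=(1+o(1))n^{k-j}/(k-j)!$. The number of such $J$ is $\binom{n-1}{j-1}-\binom{n-k-1}{j-1}=(k+o(1))n^{j-2}/(j-2)!$. This class contributes $\bigl(k/((j-2)!(k-j)!^p)+o(1)\bigr)n^{j-2+p(k-j)}$, which is the claimed main term.
\item If $1\in J$ and $J$ misses $B$, then $d(J)=\binom{n-j}{k-j}-\binom{n-j-k}{k-j}=O(n^{k-j-1})$, and there are $O(n^{j-1})$ such sets. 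The contribution is $O(n^{j-1+p(k-j-1)})$, whose exponent is smaller than the main one by $p-1>0$.
\item If $1\notin J$ and $J$ meets $B$, then $d(J)\le\binom{n-j-1}{k-j-1}+1=O(n^{k-j-1})$, and there are $O(n^{j-1})$ such sets. Again the exponent deficit is $p-1$.
\item If $1\notin J$ and $J\cap B=\emptyset$, then only sets $\{1\}\cup J\cup C$ with $C$ a $(k-j-1)$-set meeting $B$ count, so $d(J)=O(n^{k-j-2})$. This is identically $0$ when $j=k-1$. There are $O(n^j)$ such sets, so the contribution is $O(n^{j+p(k-j-2)})$, with exponent deficit $2p-2>0$.
\end{itemize}
Summing the four classes gives the stated formula.

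The one delicate point is the boundary case $j=k-1$. Sets $J$ avoiding $1$ have bounded codegree but there are about $n^{k-1}$ of them, which would beat $n^{k-3+p}$ when $p<2$. This is handled by the observation above that such $J$ have codegree $0$ unless they meet $B$, in which case there are only $O(n^{k-2})$ of them. Everything else is routine binomial asymptotics.
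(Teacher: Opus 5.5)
Your proposal is correct and takes essentially the same route as the paper. Both classify the $j$-sets by whether $1\in J$ and how $J$ meets $[2,k+1]$, compute the codegree in each class, and note that only the class $1\in J$, $J\cap[2,k+1]\neq\emptyset$ contributes at top order (for $j=1$, vertex $1$ and the $k$ vertices of $[2,k+1]$), the rest being lower order since $p>1$. The differences are cosmetic: the paper splits your class with $1\notin J$ and $J\cap[2,k+1]\neq\emptyset$ into $J\subseteq[2,k+1]$ and $J\not\subseteq[2,k+1]$ to record exact codegrees, and your explicit remark about the $j=k-1$ boundary makes precise what the paper leaves implicit in its error term.
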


\begin{proof}
We determine the codegree $d(J)$ in $\cH(n,k)$ for each $J \in \binom{[n]}{j}$ based on whether $1\in J$ as well as its interaction with the set $[2, k+1]$. Furthermore, we record the count of each type of $J \in \binom{[n]}{j}$.
\[
\renewcommand{\arraystretch}{2.25}
\begin{array}{ >{\displaystyle}c |>{\displaystyle}c | >{\displaystyle}c }
     \textit{Type} & \textit{Count} & d(J)\\
     \hline
     1 \in J,\, J \cap [2, k+1] \neq \emptyset & \binom{n-1}{j-1} - \binom{n-k-1}{j-1} & \binom{n-j}{k-j}\\
     1 \in J,\, J \cap [2, k+1] = \emptyset & \binom{n-k-1}{j-1} & \binom{n-j}{k-j} - \binom{n-k-j}{k-j}\\
     1 \not\in J,\, |J\cap [2,k+1]| = j & \binom{k}{j} & \binom{n-j-1}{k-j-1} + 1\\
     1 \not\in J,\, 1 \leq |J\cap[2,k+1]| \leq j-1 & \binom{n-1}{j} - \binom{n-k-1}{j}-\binom{k}{j} & \binom{n-j-1}{k-j-1}\\
     1 \not\in J,\, J \cap [2, k+1] = \emptyset & \binom{n-k-1}{j} & \binom{n-j-1}{k-j-1}-\binom{n-k-j-1}{k-j-1}
\end{array}
\]

For $j = 1$, two of the types are impossible. Hence, the $(1,p)$-norm of $\cH(n,k)$ is 
\begin{align*}
\coonep(\cH(n, k)) &= \left(\binom{n-1}{k-1} - \binom{n-k-1}{k-1}\right)^p + k\left(\binom{n-2}{k-2}+1\right)^p+(n-k-1)\left(\binom{n-2}{k-2} - \binom{n-k-2}{k-2}\right)^p\\
&=k^p\binom{n-2}{k-2}^p + o(n^{(k-2)p}) + k\left(\binom{n-2}{k-2}+1\right)^p + (n-k-1)\left(\binom{n-2}{k-2} - \binom{n-k-2}{k-2}\right)^p \\ 
&= \left(k^p+k+o(1)\right)\binom{n-2}{k-2}^p.
\end{align*}

For $2 \leq j \leq k-1$, the $j$-sets with $1\in J$ and $J \cap [2,k+1] \neq \emptyset$ provide the leading order term of the $(j,p)$-norm of $\cH(n,k)$. Therefore
\begin{align*}
\cojp(\cH(n,k)) &= \left(\binom{n-1}{j-1} - \binom{n-k-1}{j-1}\right)\binom{n-j}{k-j}^p + O(n^{j-1 + p(k-j-1) }) \\
&= \left(\frac{k}{(j-2)!(k-j)!^p} + o(1)\right)n^{j-2+p(k-j)}.
\end{align*}
\end{proof}

We now give the short proof of Hilton-Milner in the $(j, p)$-norm assuming Theorem~\ref{thm:hmjpnormlarge}.

\begin{proof}[Proof of Theorem~\ref{thm:hmjp}]
If $2\le j\le k-1$, then Theorem~\ref{thm:hmjp} follows by comparing the leading order terms in Lemma~\ref{lem:mjpcalc} and Theorem~\ref{thm:hmjpnormlarge}. 

If $j=1$, then for $k\ge 4$, we have $\max\{(k-1)^p + k -1, 3\cdot 2^p\} < k^{p} + k$, so again Lemma~\ref{lem:mjpcalc} and Theorem~\ref{thm:hmjpnormlarge} imply Theorem~\ref{thm:hmjp}. 
\end{proof}

The rest of this section is devoted to proving Theorem~\ref{thm:hmjpnormlarge}. We split the proof up according to whether $2\le j\le k-1$ or $j=1$. 

\subsection{The cases \texorpdfstring{$2\le j\le k-1$}{}: Bounding by \texorpdfstring{$j$-covers}{}}

Let $\cF \subset \binom{[n]}{k}$ be an intersecting family. We may assume that $\cF$ is an inclusion-maximal intersecting family, as the $(j, p)$-norms are strictly monotonic under edge addition. Recall that a set $T \subset [n]$ is a \emph{cover} of $\cF$ if $T\cap F\neq \emptyset$ for every $F\in \cF$. Define the family of $j$-covers of $\cF$ as 
\[\cC_j := \left\{J \in \binom{[n]}{j}: J\text{ is a cover of }\cF \right\}.\]

 We generalize our arguments from the proof of Theorem~\ref{thm:completeHMstability}. We first show that the family $\cC_j$ is intersecting.

\begin{lemma}\label{lem:cjint}
For $n > 2k$ and $2\le j\le k-1$, the family $\cC_j$ is an intersecting $j$-uniform family.
\end{lemma}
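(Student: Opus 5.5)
The plan is to mirror the argument given for $\cC_{k-1}$ in the proof of Theorem~\ref{thm:completeHMstability}, using the standing assumption that $\cF$ is inclusion-maximal intersecting. By definition, $\cC_j \subseteq \binom{[n]}{j}$, so $\cC_j$ is automatically $j$-uniform. What remains is to show that any two members of $\cC_j$ intersect. If $|\cC_j|\le 1$ there is nothing to prove, so we argue by contradiction and suppose there are $I, J\in \cC_j$ with $I\cap J=\emptyset$.

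The key step is to extend $I$ and $J$ to disjoint $k$-sets. Since $|I\cup J| = 2j \le 2k-2 < n - 2(k-j)$ (using $n>2k$), there are at least $n-2j \ge 2(k-j)$ elements outside $I\cup J$. We therefore choose disjoint sets $S, T \subseteq [n]\setminus (I\cup J)$ with $|S|=|T|=k-j$. We then set $I' = I\cup S$ and $J' = J\cup T$, which are disjoint $k$-sets. Because $I\subseteq I'$ and $I$ is a cover of $\cF$, the set $I'$ meets every $F\in\cF$; the same holds for $J'$.

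Next we invoke maximality. The set $I'$ meets every member of $\cF$, so $\cF\cup\{I'\}$ is still intersecting (note $I'$ meets itself trivially). By inclusion-maximality, $I'\in \cF$, and similarly $J'\in\cF$. But $I'\cap J' = \emptyset$, which contradicts the fact that $\cF$ is intersecting. Hence $I\cap J\neq\emptyset$, and $\cC_j$ is intersecting.

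There is no real obstacle here. The only point needing care is the counting that guarantees enough room to pick the disjoint padding sets $S,T$, namely $n - 2j \ge 2(k-j)$, which holds since $n>2k$. Beyond that, we only need to make explicit that maximality of $\cF$ was assumed earlier in this subsection. The hypothesis $j\ge 2$ plays no role in this lemma; it presumably matters only in later uses.
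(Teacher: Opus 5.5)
Your proof is correct and is essentially the paper's argument: you pad two disjoint covers $I,J$ with disjoint $(k-j)$-sets outside $I\cup J$, use maximality to put both padded sets in $\cF$, and contradict the intersecting property. One small slip: the intermediate inequality $2k-2 < n-2(k-j)$ in your chain is false in general (e.g.\ $k=5$, $j=2$, $n=11$), but you do not need it, since the bound you actually use, $n-2j \ge 2(k-j)$, is equivalent to $n \ge 2k$.
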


\begin{proof}
    If $|\cC_{j}| \le 1$, then clearly $\cC_j$ is intersecting, so we may assume that $|\cC_j| \ge 2$. Assume for a contradiction that $\cC_j$ is not intersecting. Let $I, J\in \cC_j$ such that $I\cap J = \emptyset$. Since $n > 2k$, we may choose disjoint $(k-j)$-sets $S, T \in [n] \setminus (I\cup J)$. The sets $I\cup S, J\cup T $ intersect every $F\in \cF$, so the maximality of $\cF$ implies $I \cup S, J\cup T \in \cF$, which is a contradiction because $\cF$ is intersecting.
\end{proof}

We next prove a lemma showing that elements of $\cC_j$ have full codegree in $\cF$, while $j$-sets not contained in $\cC_j$ have asymptotically smaller codegree.
\begin{lemma}\label{lem:jcover}
Let $\cF \subset \binom{[n]}{k}$ be a maximal intersecting family and let $j$ be an integer with $2\le j\le k-1$.  For each set $J \in \binom{[n]}{j}$:
\begin{enumerate}
\item If $J \in \cC_j(\cF)$, then $d(J) = \binom{n-j}{k-j}$.
\item If $J \notin \cC_j(\cF)$, then $d(J) \le k\binom{n-j-1}{k-j-1}$.
\end{enumerate}
\end{lemma}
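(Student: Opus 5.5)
For part (1), I will use maximality of $\cF$ directly. Fix $J \in \cC_j(\cF)$ and any $(k-j)$-set $S \subset [n]\setminus J$. Since $J$ meets every $F\in\cF$, the $k$-set $J\cup S$ also meets every member of $\cF$. Hence $\cF\cup\{J\cup S\}$ is still intersecting, and maximality forces $J\cup S\in\cF$. Every $k$-set containing $J$ therefore lies in $\cF$, which gives $d(J)=\binom{n-j}{k-j}$.

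For part (2), I will mimic the argument used for $\mathcal{C}_{k-1}$ in the proof of Theorem~\ref{thm:completeHMstability}. If $J\notin\cC_j(\cF)$, there is some $F'\in\cF$ with $J\cap F'=\emptyset$. Every $F\in\cF$ with $J\subset F$ must meet $F'$, and since $J\cap F'=\emptyset$, this means $(F\setminus J)\cap F'\neq\emptyset$. So $F\setminus J$ is a $(k-j)$-subset of $[n]\setminus J$ that contains at least one element $x$ of $F'$.

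I then take a union bound over the $k$ choices of $x\in F'$. For each $x$, the number of $(k-j)$-subsets of $[n]\setminus J$ containing $x$ is $\binom{n-j-1}{k-j-1}$. This yields
\[
d(J)\le k\binom{n-j-1}{k-j-1}.
\]

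Neither step is a real obstacle. The only points to check are that $J\cup S$ is indeed a $k$-set disjoint-free from $\cF$ in part (1), which is immediate, and that the count in part (2) correctly uses $F'\subset[n]\setminus J$. The range $2\le j\le k-1$ guarantees $k-j\ge1$, so the binomial coefficients are meaningful and $F\setminus J$ is nonempty.
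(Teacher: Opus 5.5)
Your proposal is correct and follows essentially the same argument as the paper. Part (1) uses maximality to put every $k$-superset of a cover $J$ into $\cF$, and part (2) uses a set $F_0\in\cF$ disjoint from $J$ together with a union bound over the $k$ elements of $F_0$.
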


\begin{proof}
  Let $J \subset [n]$ be a $j$-set. Suppose $J \in \cC_j$ is a cover of $\cF$. Then every $k$-set containing $J$ intersects every set in $\cF$, and so must be in $\cF$ by maximality. If, on the contrary, $J$ is not a cover, then there is some set $F_0 \in \cF$ such that $J \cap F_0 = \emptyset$. Every $F\in \cF$ which contains $J$ must have a nonempty intersection with $F_0$, so $d(J) \le k \binom{n-j-1}{k-j-1}$. 
\end{proof}

\begin{lemma}\label{lem:nonhmbd}
Let $k\ge 4$ and $2\le j\le k-1$. Suppose that $\cF \subset \binom{[n]}{k}$ is a maximal nontrivial intersecting family which is not a subfamily of any family isomorphic to $\cH(n, k)$. Then, for sufficiently large $n$ we have
\[|\cC_j(\cF)| \le \begin{cases}\frac{k-1}{(j-2)!}n^{j-2} + O_{k, j}(n^{j-3}) & \text{if } j\ge 3\\ k-1 & \text{if } j=2.\end{cases}\]
\end{lemma}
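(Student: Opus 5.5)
The plan is to use Lemma~\ref{lem:cjint}, which says $\cC_j$ is intersecting, and to split into two cases according to whether $\cC_j$ is trivial or nontrivial. This mirrors the argument for $\cC_{k-1}$ in the proof of Theorem~\ref{thm:completeHMstability}. Throughout, $n$ is large, so in particular $n>2k>2j$, and if $\cC_j=\emptyset$ there is nothing to prove.

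\emph{Case 1: $\cC_j$ is nontrivial intersecting.}
\begin{itemize}
\item For $j\ge 3$, apply Theorem~\ref{thm:hm} to the $j$-uniform family $\cC_j$ (valid since $n>2j$). This gives
\[|\cC_j|\le \binom{n-1}{j-1}-\binom{n-j-1}{j-1}+1=\frac{j}{(j-2)!}n^{j-2}+O_{k,j}(n^{j-3}).\]
Since $j\le k-1$, the right-hand side is at most $\frac{k-1}{(j-2)!}n^{j-2}+O_{k,j}(n^{j-3})$.
\item For $j=2$, a nontrivial intersecting graph has no common vertex. Pairwise-intersecting edges form either a star or a triangle, so $\cC_2$ is a triangle. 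Hence $|\cC_2|=3\le k-1$, using $k\ge4$.
\end{itemize}

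\emph{Case 2: every $J\in\cC_j$ contains a fixed element $i$.} Let $\cB_i=\{F\in\cF: i\notin F\}$.
\begin{itemize}
\item $\cB_i\neq\emptyset$, because $\cF$ is nontrivial.
\item $|\cB_i|\ne 1$. If $\cB_i=\{B\}$, then every $F\in\cF$ containing $i$ meets $B$. As in the remark after the definition of diversity, this gives $\cF\subseteq\{F: i\in F,\ F\cap B\neq\emptyset\}\cup\{B\}\cong\cH(n,k)$, contradicting the hypothesis.
\item So there are distinct $F,F'\in\cF$ avoiding $i$; put $t=|F\cap F'|$, so $1\le t\le k-1$.
\item Each $J\in\cC_j$ is a cover of $\cF$, so $J\setminus\{i\}$ is a $(j-1)$-subset of $[n]\setminus\{i\}$ meeting both $F$ and $F'$.
\end{itemize}
Counting such $(j-1)$-sets by inclusion–exclusion gives
\[|\cC_j|\le\binom{n-1}{j-1}-2\binom{n-k-1}{j-1}+\binom{n-2k+t-1}{j-1}.\]
To read off the leading term, note that a $(j-1)$-set meeting both $F$ and $F'$ either hits $F\cap F'$ or hits both $F\setminus F'$ and $F'\setminus F$.
\begin{itemize}
\item The first kind number at most $t\binom{n-2}{j-2}=\frac{t}{(j-2)!}n^{j-2}+O(n^{j-3})$.
\item The second kind number $O_k(n^{j-3})$, since they are pinned in two places.
\end{itemize}
Since $t\le k-1$, this gives the claimed bound for $j\ge3$. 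For $j=2$ the second kind needs two elements in a $1$-set, which is impossible. So $J\setminus\{i\}$ is a single element of $F\cap F'$, and $|\cC_2|\le t\le k-1$ exactly.

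\emph{Main obstacle.} The delicate step is ensuring the leading coefficient is $k-1$ rather than $k$ in both cases. In Case 2 this rests on ruling out $|\cB_i|=1$ via the hypothesis that $\cF$ is not contained in a copy of $\cH(n,k)$, which forces two distinct sets $F\ne F'$ and hence $t\le k-1$. In Case 1 it rests on $j\le k-1$, together with the separate exact treatment of $j=2$, where an additive constant rather than an $O(n^{j-3})$ error is needed.
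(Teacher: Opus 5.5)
Your proposal is correct and follows the paper's proof essentially step for step. Both arguments split on whether the intersecting family $\cC_j$ is nontrivial or has a common element $i$. In the nontrivial case one uses Hilton--Milner for $j\ge 3$ and the triangle for $j=2$. In the trivial case one takes two distinct sets of $\cF$ avoiding $i$ and applies inclusion--exclusion.

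You explicitly rule out $|\{F\in\cF: i\notin F\}|=1$ and read off the leading term by splitting into sets meeting $F\cap F'$ versus sets meeting both $F\setminus F'$ and $F'\setminus F$. The paper instead uses monotonicity in $|F\cap F'|$. Your version is slightly more careful, but it gives the same bound.
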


\begin{proof}
    By Lemma~\ref{lem:cjint}, the family $\cC_j$ is intersecting. First, suppose that $\cC_j$ is nontrivial intersecting. If $j=2$, then $\cC_j$ is contained in a triangle and thus $|\cC_j| \le 3\le k-1$. If $j\ge 3$, then by the Hilton-Milner theorem,
    \[|\cC_j| \le \binom{n-1}{j-1} - \binom{n-j-1}{j-1} + 1 = \frac{j}{(j-2)!}n^{j-2}+O_j(n^{j-3}).\]
    Suppose now that $\cC_j$ has an element $i$ which is contained in all sets in $\cC_j$. Since $\cF$ is nontrivial and not a subfamily of $\cH(n, k)$, there exist distinct $F, F'\in \cF$ such that $i\notin F\cup F'$. Since $J$ is a cover of $\cF$, we have that $(J-i)\cap F\neq\emptyset$ and $(J-i)\cap F'\neq\emptyset$. We bound $|\cC_j|$ by counting the number of possible sets in $\binom{[n]\setminus \{i\}}{j-1}$ which intersect both $F$ and $F'$. If $j=2$, then $(J-i) \subset F\cap F'$, so $|\cC_j(\cF)| \le |F\cap F'| \le k-1$. If $j\ge 3$, then we obtain 
    \begin{equation}\label{eq:cj-1bd}
|\cC_{j}| \le \binom{n-1}{j-1} - 2\binom{n-k-1}{j-1} + \binom{n-2k + |F\cap F'| - 1}{j-1}.
\end{equation}
    Since $1\le |F\cap F'| \le k-1$ and the upper bound in \eqref{eq:cj-1bd} is increasing in $|F\cap F'|$, the maximum occurs when $|F\cap F'| = k-1$. Then, we obtain
    \begin{align*}
    |\cC_j| &\le \binom{n-1}{j-1} - 2\binom{n-k-1}{j-1} + \binom{n-k-2}{j-1}\\
    &=\frac{k-1}{(j-2)!}n^{j-2} + O_{k, j}(n^{j-3}).
    \end{align*}
    Since $\max\{3, j\} \le k-1$ whenever $2\le j\le k-1$ and $k\ge 4$, the lemma follows.
\end{proof}

We can now prove Theorem~\ref{thm:hmjpnormlarge} when $2\le j\le k-1$. 

\begin{proof}[Proof of Theorem~\ref{thm:hmjpnormlarge} when $2\le j\le k-1$]
Let $\cF \subset \binom{[n]}{k}$ be a nontrivial intersecting family. By the Hilton-Milner theorem, $|\cF| \le \binom{n-1}{k-1} - \binom{n-k-1}{k-1} + 1 = O_k(n^{k-2})$. Using Lemmas~\ref{lem:jcover} and \ref{lem:nonhmbd}, we obtain
\begin{align*}
    \cojp(\cF) &= \sum_{J\in \binom{[n]}{j}}d(J)^p\\
    &=\sum_{J\in \cC_j(\cF)}d(J)^p + \sum_{J\notin\cC_j(\cF)}d(J)^p\\
    &\le \left(\frac{k-1}{(j-2)!}n^{j-2}+O_{k, j}(n^{j-3})\right)\binom{n-j}{k-j}^p + \sum_{J\notin \cC_j(\cF)}d(J)^{p-1}d(J)\\
    &\le \left(\frac{k-1}{(j-2)!}n^{j-2}+O_{k, j}(n^{j-3})\right)\binom{n-j}{k-j}^p + \left(k\binom{n-j-1}{k-j-1}\right)^{p-1}\binom{k}{j}|\cF|\\
    &= \left(\frac{k-1}{(j-2)!}n^{j-2}+O_{k, j}(n^{j-3})\right)\binom{n-j}{k-j}^p + O_{j, k, p}(n^{k-2 + (p-1)(k-j-1)})\\
    &=\left(\frac{k-1}{(j-2)!(k-j)!^p}+o(1)\right)n^{j-2+p(k-j)}. 
\end{align*}
\end{proof}

\subsection{The degree case \texorpdfstring{$j=1$}{}}

We now handle the $j=1$ case of Theorem~\ref{thm:hmjpnormlarge}. We can assume that $\cF$ is a maximal intersecting family. Let $G = \cC_2(\cF)$ be the graph whose edges are the $2$-covers of $\cF$. By Lemma~\ref{lem:jcover}, we have that
\[d_{\cF}(\{v, w\}) = \begin{cases}\binom{n-2}{k-2}, & \text{if } \{v, w\} \in E(G) \\ \le k\binom{n-3}{k-3} & \text{if } \{v, w\} \notin E(G).\end{cases}\]

By Lemma~\ref{lem:cjint}, we know that $G$ is intersecting, and so it is either a star or a triangle. If $G$ is a triangle, then it has three vertices each of degree two. Otherwise, if $G$ is a star, then observe that $d_G(u) \le k$ if $\cF$ is nontrivial, and $d_G(u) \le k-1$ if $\cF$ is nontrivial and not a subfamily of $\cH(n, k)$. Indeed, if $d_G(u) = k$, then every set in $\cF$ which avoids $u$ must contain all $k$ of the neighbors of $u$, so nontriviality implies $\cF$ is contained in a family isomorphic to $\cH(n, k)$. 

We now prove a lemma showing that for each $v\in [n]$, its degree in $\cF$ is asymptotically captured by its degree in $G$. 

\begin{lemma}\label{lem:vtxdeg}
Let $k\ge 4$, $n$ sufficiently large, and let $\cF \subset \binom{[n]}{k}$ be a maximal nontrivial intersecting family. For each $v\in [n]$, 
\begin{equation}\label{eqn:degvtx}
d_{\cF}(v) = d_G(v)\binom{n-2}{k-2} + O_{k}(n^{k-3}).
\end{equation}
For each real number $p > 1$, 
\[\coonep(\cF) = \left(\sum_{v\in [n]}d_G(v)^p+o(1)\right)\binom{n-2}{k-2}^p.\]
\end{lemma}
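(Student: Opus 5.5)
The plan is to double count pairs $(w, F)$ with $v, w \in F \in \cF$. Each $F \ni v$ contains exactly $k-1$ elements $w\neq v$, so
\[(k-1)\,d_\cF(v) = \sum_{w\ne v} d_\cF(\{v,w\}).\]
Split the sum according to whether $\{v,w\}\in E(G)$. Each of the $d_G(v)$ neighbors contributes exactly $\binom{n-2}{k-2}$, by Lemma~\ref{lem:jcover}. The non-neighbors contribute at most $k\binom{n-3}{k-3}$ each, but there are up to $n$ of them. That only gives an $O(n^{k-2})$ error, which is too crude.

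\textbf{Direct count for the upper bound.} Instead I would bound $d_\cF(v)$ directly. Every $F\ni v$ either meets $N_G(v)$ or it does not. The sets meeting $N_G(v)$ number at most $d_G(v)\binom{n-2}{k-2}$, and at least $d_G(v)\binom{n-2}{k-2} - \binom{d_G(v)}{2}\binom{n-3}{k-3}$ of these lie in $\cF$: by Lemma~\ref{lem:jcover} all of them are in $\cF$, and the overlap is handled by inclusion–exclusion. Since $d_G(v)\le k$ (shown above from nontriviality, and in the triangle case $d_G\le 2$), the overlap is $O_k(n^{k-3})$.

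\textbf{The main obstacle.} It remains to show that sets $F\ni v$ with $F\cap N_G(v)=\emptyset$ number $O_k(n^{k-3})$. I would use nontriviality: there is $F_0\in\cF$ with $v\notin F_0$. Every such $F$ meets $F_0$ in some $u\notin N_G(v)$, so $\{v,u\}$ is not a $2$-cover and has codegree at most $k\binom{n-3}{k-3}$ by Lemma~\ref{lem:jcover}. Summing over the at most $k$ choices of $u\in F_0$ gives at most $k^2\binom{n-3}{k-3}=O_k(n^{k-3})$. This step is the crux: rather than summing over all non-neighbors $w$, it restricts attention to the $k$ elements of $F_0$, and this is exactly where nontriviality enters. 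Together with the direct count, this gives \eqref{eqn:degvtx}.

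\textbf{The $\coonep$ asymptotic.} Write $D=\binom{n-2}{k-2}$ and separate vertices by whether $d_G(v)\ge 1$. Since $G$ is a star or a triangle with bounded degrees, at most $k+1$ vertices have $d_G(v)\ge1$. For these, $d_\cF(v)^p=(d_G(v)+O(1/n))^pD^p=(d_G(v)^p+o(1))D^p$. For vertices with $d_G(v)=0$, each $d_\cF(v)=O(n^{k-3})$, so I would use the same trick as in the $2\le j\le k-1$ case:
\[\sum_{d_G(v)=0} d_\cF(v)^p \le \max_v d_\cF(v)^{p-1}\sum_v d_\cF(v) \le O(n^{(k-3)(p-1)})\cdot k|\cF|.\]
By the Hilton–Milner theorem $|\cF|=O(n^{k-2})$, so this sum is $O(n^{(k-2)p-(p-1)})=o(D^p)$ because $p>1$. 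Adding the two parts gives $\coonep(\cF)=\bigl(\sum_v d_G(v)^p+o(1)\bigr)D^p$.
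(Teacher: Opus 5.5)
Your proposal is correct and is essentially the paper's argument. The upper bound uses nontriviality to pick $F_0\in\cF$ avoiding $v$, so every $F\ni v$ meets $F_0$; $G$-neighbours contribute at most $\binom{n-2}{k-2}$ each and non-neighbours at most $k\binom{n-3}{k-3}$ each. The lower bound is the same inclusion--exclusion over $N_G(v)$ with $d_G(v)\le k$, and the $\coonep$ estimate separates the at most $k+1$ vertices with $d_G(v)\ge 1$ and bounds the rest by $\max d_\cF(v)^{p-1}\cdot k|\cF|$ together with Hilton--Milner. You restrict that maximum to vertices with $d_G(v)=0$, which is slightly more careful than the paper's displayed chain: there the sum runs over all $v\in[n]$ while every term is bounded by $\bigl(k^2\binom{n-3}{k-3}\bigr)^{p-1}$, which fails for vertices with $d_G(v)\ge 1$.
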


\begin{proof}
    Let $v\in [n]$. Since $\cF$ is nontrivial, there is a set $B_v \in \cF$ which does not contain $v$. Every set $A\in \cF$ which contains $v$ must also intersect $B_v$, so
    \[d_{\cF}(v)\le \sum_{w\in B_v}d_{\cF}(\{v, w\}) = \sum_{\substack{w \in B_v \\ vw\in E(G)}}d_{\cF}(\{v, w\}) + \sum_{\substack{w\in B_v \\ vw\notin E(G)}}d_{\cF}(\{v, w\}) \le d_{G}(v)\binom{n-2}{k-2} + k^2\binom{n-3}{k-3}.\]
    On the other hand, any two edges $vw, vu \in E(G)$ are both contained in at most $\binom{n-3}{k-3}$ sets in $\cF$. Hence, by inclusion-exclusion, 
    \[d_{\cF}(v) \ge d_{G}(v) \binom{n-2}{k-2} - \binom{d_{G}(v)}{2}\binom{n-3}{k-3}.\]
    Since $d_G(v) \le k$ for each $v\in [n]$, we have $d_{\cF}(v) \ge d_{G}(v) \binom{n-2}{k-2} - \binom{k}{2}\binom{n-3}{k-3}$, proving \eqref{eqn:degvtx}. 
    Since $G$ is either a star with at most $k$ leaves or a triangle, it follows that at most $k+1$ vertices in $[n]$ have $d_G(v) \ge 1$. For the vertices $v\in [n]$ which have $d_{G}(v) = 0$, \eqref{eqn:degvtx} implies that $d_{\cF}(v) = O_k(n^{k-3})$. Recall that the Hilton-Milner theorem implies $\sum_{v}d_{F}(v) = k|\cF| = O_k(n^{k-2})$. Hence, we compute the $(1, p)$-norm of $\cF$ as
    \begin{align*}
        \coonep(\cF) &= \sum_{v\in [n]}d_{\cF}(v)^p \\
        &=\sum_{\substack{v\in [n] \\ d_{G}(v)\ge 1}}d_{\cF}(v)^p + \sum_{\substack{v\in [n] \\ d_{G}(v) = 0}}d_{\cF}(v)^p\\
        &=\sum_{\substack{v\in [n] \\ d_{G}(v)\ge 1}}\left(d_G(v)^p + o(1)\right)\binom{n-2}{k-2}^p + \sum_{\substack{v\in [n] \\ d_{G}(v) = 0}}d_{\cF}(v)^{p-1}d_{\cF}(v)\\
        &\le \sum_{\substack{v\in [n] \\ d_{G}(v)\ge 1}}\left(d_G(v)^p + o(1)\right)\binom{n-2}{k-2}^p + \sum_{v\in [n]}d_{\cF}(v)^{p-1}d_{\cF}(v)\\
        & \le \sum_{\substack{v\in [n] \\ d_{G}(v)\ge 1}}\left(d_G(v)^p + o(1)\right)\binom{n-2}{k-2}^p + \left(k^2\binom{n-3}{k-3}\right)^{p-1}k|\cF|\\
        & = \sum_{\substack{v\in [n] \\ d_{G}(v)\ge 1}}\left(d_G(v)^p + o(1)\right)\binom{n-2}{k-2}^p + o\left(n^{(k-2)p}\right)\\
        & = \left(\sum_{v \in [n]}d_{G}(v)^p+o(1)\right)\binom{n-2}{k-2}^p.
    \end{align*}
    This completes the proof. 
\end{proof}

We now complete the proof of Theorem~\ref{thm:hmjpnormlarge}. 

\begin{proof}[Proof of Theorem~\ref{thm:hmjpnormlarge} when $j=1$]
    As the $(j, p)$-norm does not decrease under edge addition, we may suppose $\cF$ is a maximal nontrivial intersecting family with $\cF \not \subset\cH(n, k)$. Then the two-cover graph $G = \cC_2(\cF)$ is either a star with $r \le k-1$ edges or a triangle. If $G$ is a star, then $\sum_{v\in [n]}d_G(v)^p = r^p + r \le (k-1)^p + k-1$. If $G$ is a triangle, then $\sum_{v\in [n]}d_G(v)^p = 3\cdot 2^p$. By Lemma~\ref{lem:vtxdeg},
    \[\coonep(\cF) \le( \max\{(k-1)^p + k -1, 3\cdot 2^p\} + o(1))\binom{n-2}{k-2}^p.\]
\end{proof}
\section{Exact 3-uniform Hilton-Milner in \texorpdfstring{$(j, p)$-norm}{}}
In this section, we prove an exact Hilton-Milner result in the $(j,p)$-norm for nontrivial 3-uniform families.
\begin{proof}[Proof of Proposition~\ref{thm:jpk=3}]
We directly compute 
\begin{align*}
\coonep(\cH(n, 3)) &= (3n-9)^p + 3(n-1)^p + (n-4)3^p \\
\coonep(\cA(n, 3)) &= 3(2n-5)^p + (n-3)3^p\\
\cotwop(\cH(n, 3)) &= 3(n-2)^p + (n-4)3^p + 3\cdot 2^p + 3(n-4)\\
\cotwop(\cA(n, 3)) &= 3(n-2)^p + 3(n-3)2^p.
\end{align*}

Note that
\[\cotwop(\cH(n, 3)) - \cotwop(\cA(n, 3)) = (n-4)(3^p - 3\cdot 2^p + 3) = 3(n-4)(3^{p-1} + 1 - 2\cdot 2^{p-1}).\]
When $p=2$, this expression is $0$. When $1 < p < 2$, the function $g(t) = t^{p-1}$ is strictly concave, so $\cotwop(\cH(n, 3)) - \cotwop(\cA(n, 3)) < 0$. When $p > 2$, the function $g(t) = t^{p-1}$ is strictly convex, so $\cotwop(\cH(n, 3)) - \cotwop(\cA(n, 3)) > 0$. 

For the $(1, p)$-norm, 
\[\coonep(\cH(n, 3)) - \coonep(\cA(n, 3)) = (3n-9)^p + 3(n-1)^p - 3(2n-5)^p - 3^p. \]
When $p=2$, this evaluates to $0$. Otherwise, define the function $f(x) = (3x-9)^p + 3(x-1)^p-3(2x-5)^p-3^p$. Then $f(4) = 3^p + 3\cdot3^p-3\cdot 3^p - 3^p = 0$. Observe that $f'(x) = 3p((3x-9)^{p-1} + (x-1)^{p-1}-2(2x-5)^{p-1})$. Since $2x-5 = \frac{(x-1)+(3x-9)}{2}$, it follows from the concavity of $t^{p-1}$ that when $1 < p < 2$, $\coonep(\cH(n, 3)) - \coonep(\cA(n, 3)) < 0$. The convexity of $t^{p-1}$ when $p > 2$ similarly implies that $\coonep(\cH(n, 3)) - \coonep(\cA(n, 3)) > 0$ in that case. 

As the $(j, p)$-norm is monotonic under edge addition, we may assume that any other nontrivial intersecting $\cF$ is inclusion-maximal. Polcyn and Ruci\'nski~\cite{PR} classified all of the maximal $3$-uniform intersecting families. There are fourteen such non-star families. It can be checked directly that all twelve of the families which are not isomorphic to $\cH(n, 3)$ or $\cA(n, 3)$ have smaller $(1, p)$-norm and $(2, p)$-norm than $\cH(n, 3)$. 
\end{proof}
\section*{Acknowledgment}
ChatGPT was used as an aid in the development of the paper. All arguments in the paper have been independently written and verified by the authors, who take full responsibility for the contents of the paper.

\end{document}